\newcommand{\TLT}{\mathcal{T}}
\newcommand{\si}{\sigma}
\newcommand{\eps}{\varepsilon}
\newcommand{\ip}{{Insertpoint}}
\newcommand{\rp}{{Removepoint}}
\newcommand{\grow}[1]{\left[{#1}\right]}
\newcommand{\ggrow}[1]{\left[{#1}\right]^*}
\newtheorem{thm}{Theorem}[section]
\newtheorem{prop}[thm]{Proposition}
\newtheorem{lemma}[thm]{Lemma}
\newtheorem{cor}[thm]{Corollary}
\theoremstyle{definition}
\newtheorem{defi}[thm]{Definition}
\newtheorem{rem}[thm]{Remark}
\DeclareMathOperator{\dleft}{left}
\DeclareMathOperator{\dtop}{top}
\DeclareMathOperator{\diag}{diag}
\author{Jean-Christophe Aval}
\address{LaBRI, Universit\'e Bordeaux 1, 351 cours de la lib\'eration, 33405 Talence
 cedex, FRANCE}
\email{aval@labri.fr}
\author{Adrien Boussicault}
 \address{LaBRI, Universit\'e Bordeaux 1, 351 cours de la lib\'eration, 33405 Talence
 cedex, FRANCE}
\email{boussicault@labri.fr}
\author{Philippe Nadeau}
\address{CNRS, Universit\'e Lyon 1, Institut Camille Jordan, 43 boulevard du 11 novembre 1918, 69622 Villeurbanne cedex, France.}
\email{nadeau@math.univ-lyon1.fr}
\thanks{All authors are supported by the ANR (PSYCO project -- ANR-11-JS02-001)}
\title[Tree-like tableaux]{Tree-like tableaux}
\begin{document}

\begin{abstract}
 
In this work we introduce and study \emph{tree-like tableaux}, which are certain fillings of Ferrers diagrams in simple bijection with permutation tableaux and alternative tableaux. We exhibit an elementary insertion procedure on our tableaux which gives a clear proof that tree-like tableaux of size $n$ are counted by $n!$, and which moreover respects most of the well-known statistics studied originally on alternative and permutation tableaux. Our insertion procedure allows to define in particular two simple new bijections between tree-like tableaux and permutations: the first one is conceived specifically to respect the generalized pattern 2-31, while the second one respects the underlying tree of a tree-like tableau.

\end{abstract}

\maketitle

\section*{Introduction}

Permutation tableaux and alternative tableaux are equivalent combinatorial objects that have been the focus of intense research in recent years. Originally introduced by Postnikov~\cite{Postnikov}, they were soon studied by numerous combinatorialists~\cite{Burstein, CorNad,SteinWil, Williams_Grassmann,Nad,Vien}. They also popped up surprisingly in order to get a combinatorial understanding of the equilibrium state of the PASEP model from statistical mechanics: this is the seminal work of Corteel and Williams, see~\cite{CorWil_Markov, CorWil_Tableaux, CorWil_Staircase}.

In this work we introduce and study tree-like tableaux (cf. Definition~\ref{defi:tlt}), which are objects in simple bijection with alternative tableaux. Indeed, our results have immediate reformulations in terms of alternative/permutation tableaux (see Proposition~\ref{prop:reformulations}). We chose to focus on these new tableaux for one main reason: they exhibit a natural tree structure (giving them their name: cf. Figure~\ref{fig:example_tlt}, right) more clearly than the alternative tableaux, and we use this structure in Section~\ref{sub:bijection2}. As is mentioned in this last section, the present work originated in fact in the study of trees.

 The main result of this work is Theorem~\ref{thm:insertion}:\medskip

 \emph{There is a simple bijective correspondence $\ip$ between 
 \begin{enumerate}
 \item tree-like tableaux of size $n$ together with an integer $i\in\{1,\ldots,n+1\}$, and 
 \item tree-like tableaux of size $n+1$.
 \end{enumerate}
 }
\medskip

A variation $\ip^*$  for symmetric tableaux is also defined and shares similar properties, see Theorem~\ref{thm:insertionB}. We prove that both $\ip$ and $\ip^*$ carry various statistics of tableaux in a straightforward manner: we obtain thus new easy proofs of formulas enumerating tableaux and symmetric tableaux (Section~\ref{sub:refined_enum}), as well as information on the average number of crossings and cells of tableaux (Section~\ref{sec:crossperm}).

An immediate corollary of Theorem~\ref{thm:insertion} is that tree-like tableaux of size $n$ are enumerated by $n!$, while Theorem~\ref{thm:insertionB} shows that symmetric tableaux of size $2n+1$ are enumerated by $2^nn!$. Several bijections between tableaux and permutations appeared already in the literature; the ones that seem essentially distinct are~\cite{SteinWil} and the two bijections from~\cite{CorNad}. All of them give automatically a correspondence as in Theorem~\ref{thm:insertion}, but none of them is as elementary as $\ip$. Conversely, it is clear that $\ip$ allows to define various bijections between permutations and tableaux. We will describe two of them here: the first one sends crossings to occurrences of the generalized pattern 2-31, while the second one preserves the binary trees naturally attached to permutations and tree-like tableaux.
\medskip

Let us give a brief outline of this work. Section~\ref{sec:defs} introduces numerous definitions and notations, and most notably the tree-like tableaux which are the central focus of this work. Section~\ref{sec:insertion} is the core section of this article: we introduce our main tool, the insertion $\ip$, and prove that it gives a $1$-to-$(n+1)$ correspondence between tableaux of size $n$ and $n+1$. We use it to give elementary proofs of refined enumeration formulas for tableaux. We also define a modified insertion $\ip^*$ for symmetric tableaux from which refined enumeration formulas are derived in a similar fashion. In Section~\ref{sec:crossperm} we keep using the insertions $\ip$ and $\ip^*$ to enumerate crossings and cells in tableaux. We also give a bijection between square symmetric tableaux and ordered partitions. In Section~\ref{sec:bijections} we define two bijections between tree-like tableaux and permutations, both based on $\ip$: the first one sends crossings to occurrences of the pattern 2-31, while the second one ``preserves trees'': it sends the tree structure of the tree-like tableau to a tree  naturally attached to the permutation (its increasing tree without its labels). 

\section{Definitions and Notation}
\label{sec:defs}

\subsection{Basic definitions}

 A \emph{Ferrers diagram} $F$ is a left aligned finite set of unit cells in $\mathbb{Z}^2$, in decreasing number from top to bottom, considered up to translation: see Figure~\ref{fig:example_diag}, left. The \emph{half-perimeter} of $F$ is the sum of its number of rows plus its number of columns; it is also equal to the number of \emph{boundary edges}, which are the edges found on the Southeast border of the diagram. We will also consider \emph{boundary cells}, which are the cells of $F$ with no other cells to their Southeast.

 There is a natural Southwest to Northeast order on boundary edges, as well as on boundary  cells. Moreover, by considering the Southeast corner of boundary cells, these corners are naturally intertwined with boundary edges: we will thus speak of a boundary cell being Southwest or Northeast of a boundary edge. Two cells are \emph{adjacent} if they share an edge. 

 {\em Ribbons:} Given two Ferrers diagrams $F_1\subseteq F_2$, we say that the set of cells $S=F_2-F_1$ (set-theoretic difference) is a \emph{ribbon} if it is connected (with respect to adjacency) and contains no $2\times 2$ square. In this case we say that $S$ can be added to $F_1$, or that it can be removed from $F_2$. Note that a removable ribbon from $F$ is equivalently a connected set $S$ of boundary cells of $F$, such that the Southwest-most cell of $S$ has no cell of $F$ below it, and the Northeast-most cell of $S$ has no cell of $F$ to its right.

\begin{figure}[!ht]
\begin{center}
  \includegraphics[width=\textwidth]{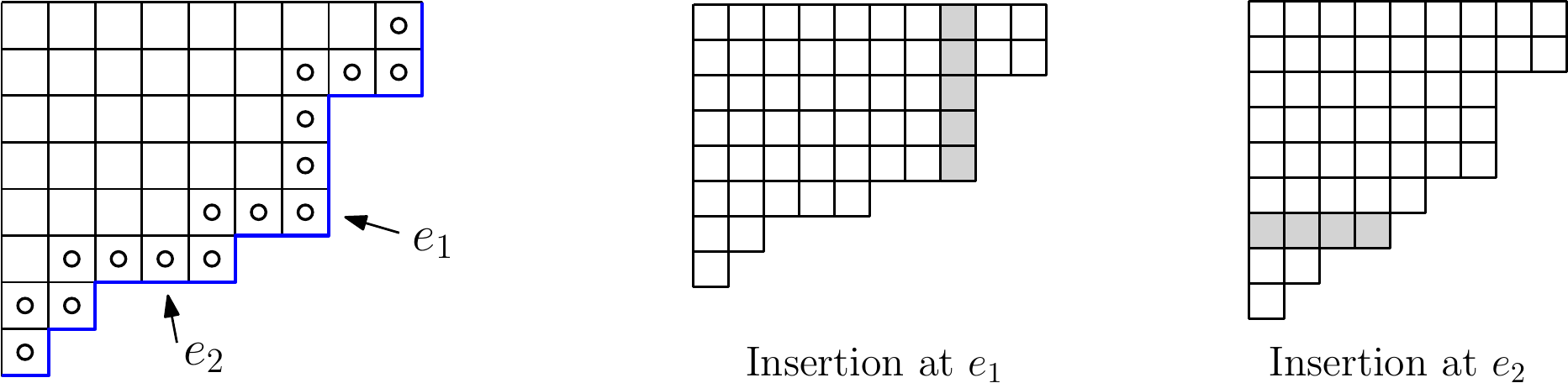}
\caption{A Ferrers diagram of half perimeter $17$ with its highlighted boundary cells and edges (left), and examples of column/row insertions at the boundary edges $e_1$ and $e_2$. \label{fig:example_diag}}
\end{center}
\end{figure}

  {\em Row/Column insertion:} Let $F$ be a Ferrers diagram and $e$ one of its boundary edges. If $e$ is at the end of a row $r$, we define the insertion of a column at $e$ to be the addition of a cell to $r$ and all rows above it; symmetrically, if $e$ is at the end of a column denoted by $c$, we can insert a row at $e$ by adding a cell to $c$ and all columns to its left; see Figure~\ref{fig:example_diag}, where the shaded cells of the figure are the added cells of the column or row. 
\medskip

\noindent{\bf Permutations and trees:}
 We consider \emph{permutations} $\si$ of $\{1,\ldots,n\}$, which are bijections from $\{1,\ldots,n\}$ to itself, and are counted by $n!$. We will represent permutations as words $\si_1\ldots\si_n$ of length $n$ where $\si_i=\si(i)$. A \emph{descent} is an index $i<n$ such that $\si_i>\si_{i+1}$. An occurrence of the pattern 2-31 in $\si$ is a pair $(i,j)$ of two indices such that $1\le i<j<n$ and $\si_{j+1}<\si_i<\si_j$.

A \emph{planar binary tree} is a rooted tree such that each vertex has either two ordered children or no child; vertices with no child are called \emph{leaves}, those of degree $2$ are called \emph{nodes}. The \emph{size} of a tree is its number of nodes; see Figure~\ref{fig:example_tlt} (right) for an example of tree of size $8$. 

\subsection{Tree-like tableaux}
\label{sub:tlt}

We can now define the main object of this work:
\begin{defi}[Tree-like tableau]
\label{defi:tlt}
A \emph{tree-like tableau} is a Ferrers diagram where each cell contains either $0$ or $1$ point (called respectively empty cell or pointed cell), with the following constraints:

 (1) the top left cell of the diagram contains a point, called the \emph{root point};

 (2) for every non-root pointed cell $c$, there exists either a pointed cell above $c$ in the same column, or a pointed cell to its left in the same row, {but not both}; 

 (3) every column and every row possesses at least one pointed cell.
\end{defi}
 
An example is shown on the left of Figure~\ref{fig:example_tlt}. 

\begin{rem}\label{rem:tree}
Condition (2) associates to each non-root point a unique other point above it or to its left. Now draw an edge between these two points for each non-root point, as well as an edge from every boundary edge to the closest point in its row or column: the result is a binary tree, where nodes and leaves correspond respectively to pointed cells and boundary edges. This is pictured in Figure~\ref{fig:example_tlt}, and explains the name {\em tree-like} given to our tableaux. We will come back to this tree structure with more detail in Section~\ref{sub:bijection2}.
\end{rem}

\begin{figure}[!ht]
\begin{center}
  \includegraphics[width=\textwidth]{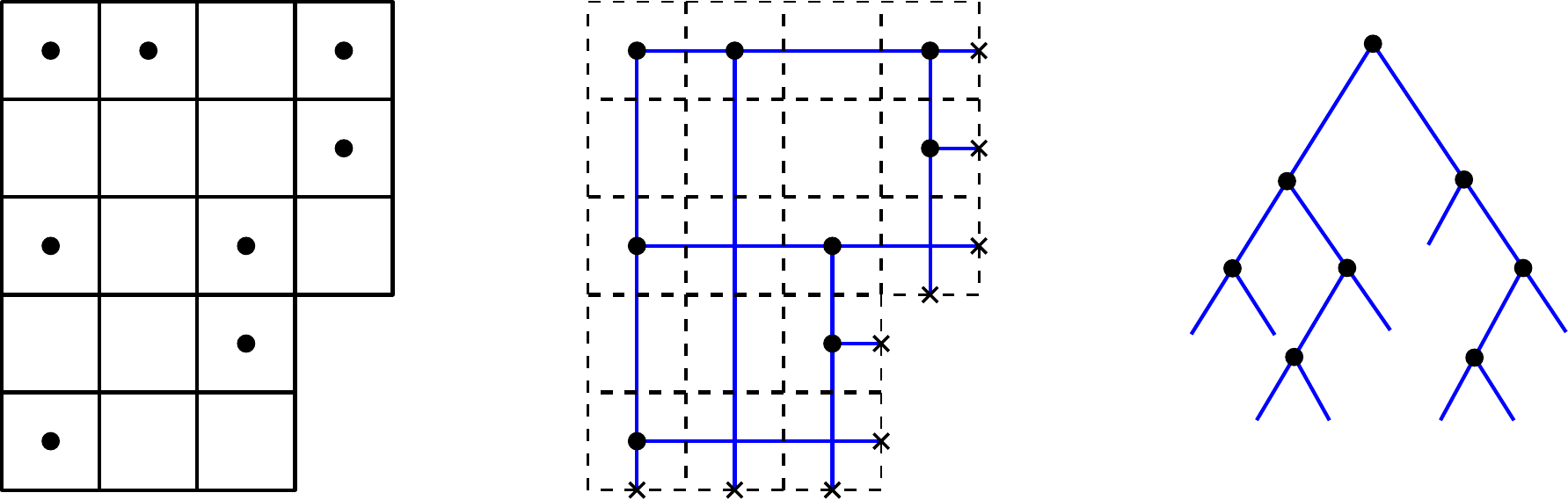}
\caption{A tree-like tableau (left) and the associated tree (right). \label{fig:example_tlt}}
\end{center}
\end{figure}

Let $T$ be a tree-like tableau. If the diagram of $T$ has half-perimeter $n+1$, then $T$ has exactly $n$ points: indeed Condition (2) associates to each row and column a unique point, except that the first row and column are both associated to the root-point. We let $n$ be the \emph{size} of $T$, and we denote by $\TLT_n$ the set of tree-like tableaux of size $n$. A \emph{crossing} of $T$ is an empty cell of $T$ with both a point above it and to its left; we let $cr(T)$ be the number of crossings of $T$. The {\em top points} ({respectively} {\em left points}) of $T$ are the non-root points appearing in the first row ({resp.} the first column) of its diagram. The tableau of Figure~\ref{fig:example_tlt} has $4$ crossings, $2$ top points and $2$ left points.
\medskip

\subsection{Alternative tableaux and permutation tableaux}

Tree-like ta\-bleaux are closely related to alternative tableaux~\cite{Nad,Vien} as follows: given a tree-like tableau, change every non-root point $p$ to an arrow which is oriented left (respectively up) if there is no point left of $p$ (resp. above $p$). This transforms the tableau into a \emph{packed} alternative tableau~\cite[Section 2.1.2]{Nad}, which is an alternative tableau with the maximal number of arrows for its half-perimeter. To obtain an alternative tableau, one simply deletes the first row and first column (empty rows and columns may then occur). 
Moreover, the shape is preserved: to any Ferrers diagram $F$, we associate (see Figure \ref{fig:diagrams}):
\begin{itemize}
\item a diagram $F'$ obtained by removing the Southwest-most boundary edge of $F$, and the cells of the left-most column,
\item a diagram $F''$ obtained by removing the Southwest-most and Northeast-most boundary edges of $F$, 
and the cells of the left-most column and of the top-most row.
\end{itemize}

\begin{figure}[!ht]
 \begin{center}
  \includegraphics[width=0.8\textwidth]{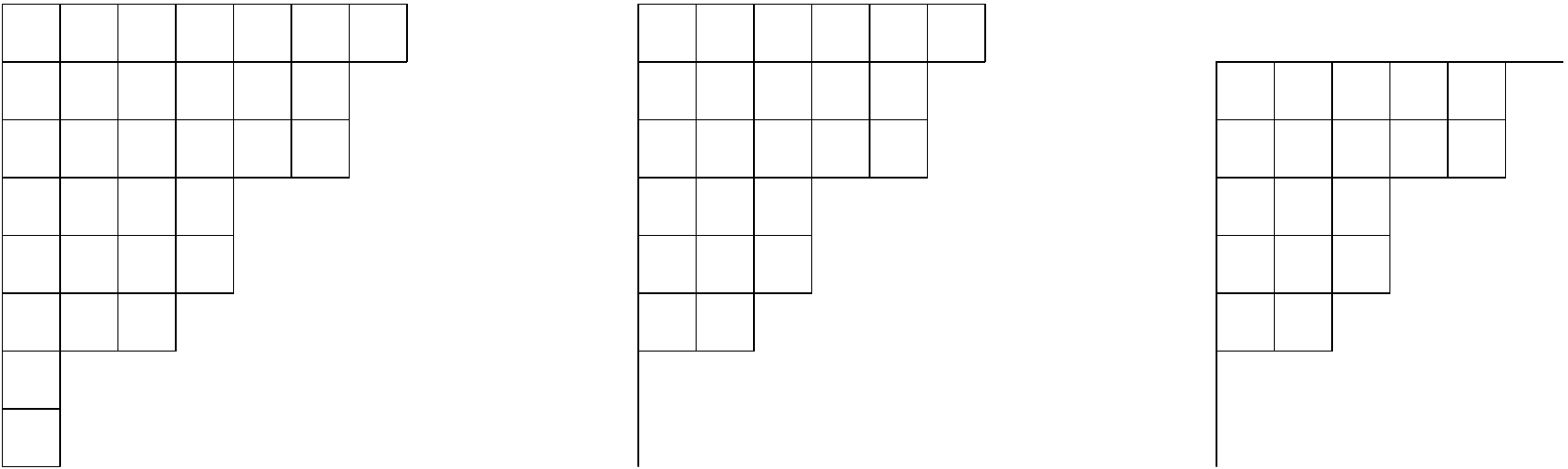}
  \caption{Diagrams $F$, $F'$ and $F''$. \label{fig:diagrams}}
 \end{center}
\end{figure}

We have the following correspondences, where we refer to~\cite{Nad} for definitions on alternative and permutation tableaux:

\begin{prop}
\label{prop:reformulations}
Let $n,i,j,k,\ell$ be nonnegative integers. There exist bijections between:
 
(1) Tree-like tableaux of half-perimeter $n+1$, with $i$ left points, $j$ top points, $k$ rows, $\ell$ crossings, and with shape a Ferrers diagram $F$.

(2) Permutation tableaux of half-perimeter $n$ with $i+1$ unrestricted rows, $j$ top ones, $k$ rows, $\ell$ superfluous ones, and with shape $F'$.

(3) Alternative tableaux of half-perimeter $n-1$ with $i$ free rows, $j$ free columns, $k-1$ rows, $\ell$ free cells, and with shape $F''$.
\end{prop}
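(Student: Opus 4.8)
The plan is to promote the informal recipe given just before the statement to a precise, shape-preserving bijection and then to read off each of the four statistics. I would first treat the equivalence (1)$\leftrightarrow$(3) in full, and obtain (2) afterwards by composing with the classical correspondence between alternative and permutation tableaux from~\cite{Nad}.

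Starting from a tree-like tableau $T$ of shape $F$, replace each non-root point by an arrow --- pointing left when there is no point to its left and up when there is no point above it --- and erase the root point. Condition~(2) of Definition~\ref{defi:tlt} asserts that for every non-root point exactly one of these two alternatives occurs, so the arrow is always well defined, while condition~(3) guarantees that every row and column still carries an arrow; hence the image is a \emph{packed} alternative tableau of the same shape $F$. Conversely each arrow of such a tableau can be turned back into a point and the root reinstated in the top-left cell, the outcome satisfying (1)--(3); this yields a bijection between tree-like tableaux of shape $F$ and packed alternative tableaux of shape $F$. Deleting the first row and first column, i.e. the Southwest-most and Northeast-most boundary edges together with the leftmost column and topmost row, sends a packed alternative tableau of shape $F$ to an alternative tableau of shape $F''$ (Figure~\ref{fig:diagrams}); this is the standard unpacking of an alternative tableau.

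It then remains to transport the statistics along this composite map, which is where the care lies. I would verify directly from the definitions of~\cite{Nad} that the $i$ left points, $j$ top points, $k$ rows and $cr(T)=\ell$ crossings of $T$ correspond respectively to the $i$ free rows, $j$ free columns, $k-1$ rows and $\ell$ free cells of the associated alternative tableau, the $-1$ in the row count recording the deletion of the top row and the root. The most delicate of these is the identification of crossings with free cells, which I would carry out at the level of the packed tableau, before the first row and column are removed. For (1)$\leftrightarrow$(2) I would then compose with the known bijection between alternative and permutation tableaux, which matches free rows, free columns and free cells with unrestricted rows, top ones and superfluous ones; the first column that reappears when passing from $F''$ to $F'$ is exactly what produces the extra unrestricted row ($i+1$ rather than $i$) and restores the row count to $k$ on the permutation side.

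The real difficulty is not the construction of the maps, which is dictated by the arrow substitution and by the already-established packed/alternative/permutation dictionary, but the bookkeeping of the boundary. The privileged role of the root point and of the first row and column is responsible for every index shift ($i+1$ unrestricted rows, $k-1$ rows in the alternative case), and the one genuinely combinatorial point to pin down is that crossings correspond precisely to free cells (equivalently, to superfluous ones) rather than to some neighbouring statistic.
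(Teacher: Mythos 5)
Your proposal follows exactly the route the paper intends: the paper gives no formal proof of Proposition~\ref{prop:reformulations}, but the paragraph preceding it describes precisely your arrow substitution (points $\to$ left/up arrows via condition~(2) of Definition~\ref{defi:tlt}), the passage to packed alternative tableaux, the deletion of the first row and column to reach shape $F''$, and the reference to~\cite{Nad} for the permutation-tableau side and the matching of statistics. Your elaboration of the statistic bookkeeping (in particular crossings versus free cells, and the index shifts coming from the root point and the deleted row/column) is the correct and expected completion of that sketch.
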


\section{The fundamental insertion}
\label{sec:insertion}

This section is the core of this work. We describe a new way of inserting points in tree-like tableaux, shedding new light on numerous enumerative results on those tableaux.

\subsection{Main Result}
\label{sub:mainresult}

The key definition is the following one, which introduces a distinguished point in a tableau:
\begin{defi}[Special point]
\label{defi:sp} 
Let $T$ be a tree-like tableau. The \emph{special point} of $T$ is the Northeast-most point among those that occur at the bottom of a column.
\end{defi}
This is well-defined since the bottom row of $T$ necessarily has a pointed cell (Definition~\ref{defi:tlt}, (3)), which is then at the bottom of a column.
\medskip

\begin{defi}[$\ip$]
\label{defi:insertion}
  Let $T$ be a tableau of size $n$ and $e$ be one of its boundary edges. Let $T'$ be the tableau obtained by inserting a row (\emph{resp.} column) at $e$ and then pointing its rightmost (\emph{resp.} lowest) cell. Then we distinguish two cases:

(1) If $e$ is to the Northeast of the special point of $T$, then we simply define $\ip(T,e):=T'$;

(2) Otherwise, we add a ribbon starting just to the right of the new point of $T'$ and ending just below the special point of $T$. (If $e$ is the lower edge of the special cell of $T$, do nothing.) The result is a tableau $T''$, and define $\ip(T,e):=T''$.
\end{defi}

The result is a tree-like tableau of size $n+1$, since all three conditions of Definition~\ref{defi:tlt} are clearly satisfied. Examples of the two cases of $\ip$ are given in Figure~\ref{fig:insert}, while insertion at all possible edges of a given tableau is represented in Figure~\ref{fig:example_insert}. Cells from the inserted rows or columns are shaded, while those from added ribbons are marked with a cross. 

\begin{figure}[!ht]
\begin{center} 
 \includegraphics[width=\textwidth]{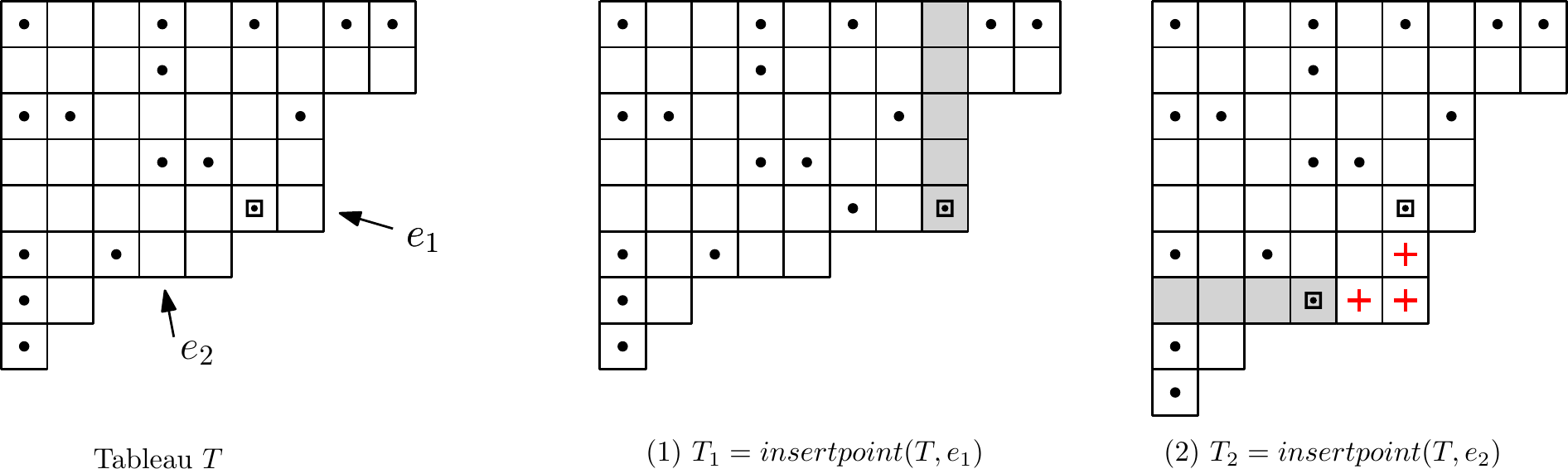}
\caption{The two cases in the definition of $\ip$. \label{fig:insert}}
\end{center}
\end{figure}

\begin{thm}
\label{thm:insertion}
For any $n\geq 1$, the insertion procedure $insertpoint$ is a bijection between:

(A) The set of pairs $(T,e)$ where $T\in \TLT_n$ and $e$ is one of the $n+1$ boundary edges of $T$, and

(B) The set $\TLT_{n+1}$.
\end{thm}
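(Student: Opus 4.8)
The plan is to prove that $\ip$ is a bijection by constructing its inverse explicitly, a removal procedure $\rp\colon\TLT_{n+1}\to\{(T,e)\}$, and checking that $\rp\circ\ip$ and $\ip\circ\rp$ are both the identity. The whole construction hinges on a single observation that I would isolate as a lemma: \emph{for every pair $(T,e)$, the special point of $\ip(T,e)$ is exactly the point created during the insertion} (the pointed cell of the new row or column). Granting this, the output $\ip(T,e)$ carries, in its special point, a canonical marker of ``the last point added'', which is precisely what an inverse needs in order to know where to act. First, though, I would record the well-definedness already asserted after Definition~\ref{defi:insertion}: conditions (1)--(3) hold, and the size is $n+1$ because the single row/column insertion raises the half-perimeter by one while the ribbon of case~(2) only fills cells lying in rows and columns already present after that insertion, hence leaves the half-perimeter unchanged.

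For the key lemma I would argue case by case, using that the inserted point $p$ always sits at the bottom of its column (the ribbon, when present, is added to the right of $p$, never below it). In case~(1), $p$ occupies the edge $e$, which lies Northeast of the old special point $s$; every other pointed cell sitting at the bottom of a column in $\ip(T,e)$ is inherited from $T$, hence was $\preceq s$, and the insertion keeps it Southwest of $p$, so $p$ is the Northeast-most such cell. In case~(2), the ribbon runs from just right of $p$ up to just below $s$, so it becomes the new bottom of every column it meets, from the one just right of $p$ through the column of $s$; this strips $s$ and all inherited candidates lying between $p$ and $s$ of their ``bottom-of-column'' status, leaving $p$ as the Northeast-most pointed bottom-of-column cell. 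In both cases $p$ is the special point, as claimed.

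Then I would define $\rp$ on $S\in\TLT_{n+1}$: locate the special point $p^*$; read off the ribbon as the (possibly empty) maximal run of empty boundary cells lying Northeast of $p^*$ and each at the bottom of its column, and retract it; finally delete the row of $p^*$ if $p^*$ is the unique point of its row, and otherwise delete its column, recording the boundary edge left behind as $e$. The row-versus-column choice is forced: a freshly inserted point is the lone point of its new row (row insertion) or of its new column (column insertion) but, being inserted into a pre-existing column resp.\ row, never of both. With the key lemma in hand, running $\rp$ after $\ip(T,e)$ recovers $p$ as $p^*$, retracts precisely the ribbon of case~(2) (empty in case~(1)), and deletes the inserted row or column to return $(T,e)$; conversely $\ip\circ\rp=\mathrm{id}$ follows because the presence or absence of a ribbon Northeast of $p^*$ records exactly the case dichotomy, so the two constructions undo each other step for step. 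I expect the main obstacle to be the geometric bookkeeping of the ribbon: verifying that in case~(2) it is always a legal ribbon confined to the existing rows and columns, that it lands exactly below $s$, and that retraction in $\rp$ reproduces it verbatim --- in short, that the empty Northeast-boundary run of $S$ is genuinely the ribbon added by $\ip$ and nothing more.
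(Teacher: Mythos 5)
Your overall strategy is exactly the paper's: the same key lemma (the new point is the special point of $\ip(T,e)$), proved by the same two-case argument, followed by an explicit inverse $\rp$. The lemma and its proof are fine. The genuine problem is in your definition of $\rp$, specifically the clause that the ribbon to be retracted is the maximal run of empty boundary cells Northeast of $p^*$ \emph{each at the bottom of its column}. That condition is false for ribbon cells lying in a vertical segment of the ribbon, and such segments do occur. Concrete counterexample: let $T\in\TLT_3$ have rows of lengths $2,1$ with points in cells $(1,1)$, $(1,2)$, $(2,1)$; its special point is $(1,2)$. Inserting at the Southwest-most boundary edge (a row insertion at the bottom of column $1$) creates the new point $(3,1)$ and, since we are in case (2), adds the ribbon $\{(3,2),(2,2)\}$, a vertical domino in column $2$; the result $S$ has rows $2,2,2$. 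In $S$ the cell $(2,2)$ is an empty boundary cell of the ribbon but is \emph{not} at the bottom of its column, so your run stops after $(3,2)$: you retract only one cell, obtain rows $2,2,1$, then delete the bottom row and land on the shape $2,2$ rather than on $T$. So as written $\rp\circ\ip\neq\mathrm{id}$ and $\rp$ does not even map into $\TLT_n$. The fix is the paper's formulation: walk Northeast along the boundary cells from the special cell until you meet the first \emph{pointed} boundary cell $c'$, and remove all the empty boundary cells strictly between; no bottom-of-column condition should be imposed.

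The second, smaller gap is the one you yourself flag at the end: you assert that $\ip\circ\rp=\mathrm{id}$ because ``the presence or absence of a ribbon records the case dichotomy,'' but the step that actually needs proving is that, after the ribbon and the row/column of $p^*$ are removed, the special point of the stripped tableau $T_1$ sits exactly where it must for $\ip(T_1,e)$ to regenerate the removed ribbon. The observation that closes this is short: when a nonempty ribbon is removed, the terminating pointed cell $c'$ becomes a bottom-of-column cell of $T_1$, and it is the Northeast-most such pointed cell (everything Northeast of it was already ruled out because $p^*$ was special in $S$); hence $c'$ is the special point of $T_1$, the edge $e$ lies Southwest of it, and case (2) of $\ip$ rebuilds precisely the ribbon dictated by the shape, which is the one removed. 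When no ribbon was removed, the special point of $T_1$ is Southwest of $e$ and case (1) applies. Without this localization of the special point of $T_1$, the claim that the constructions ``undo each other step for step'' is an assertion, not a proof.
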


Before we give the proof, we need the following fundamental lemma:

\begin{lemma}
\label{lemma:fundamental}
 If $\ip(T,e):=T'$, then the special point of $T'$ is the new point added during the insertion.
\end{lemma}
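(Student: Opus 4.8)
The plan is to replace the definition of the special point by a more usable description. Among the cells that sit at the bottom of a column, the Southwest--Northeast order coincides with the order of column indices: moving Northeast, such cells occur in columns of strictly increasing index. Hence the special point of any tableau is simply its pointed bottom-of-column cell lying in the rightmost column. It therefore suffices to prove, for the point $q$ added by $\ip(T,e)$, that (i) $q$ lies at the bottom of a column of $\ip(T,e)$, and (ii) no column strictly to the right of the column of $q$ has a pointed bottom cell. Fact (i) is immediate from the construction: $\ip$ points the lowest cell of an inserted column, or the rightmost cell of an inserted row, which is then the new lowest cell of its column; and in case~(2) the added ribbon begins strictly to the right of $q$ and never passes below it, so $q$ keeps its bottom-of-column status in $T''$. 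The content of the lemma is thus fact (ii).

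Suppose first we are in case~(1), so that $e$ is Northeast of the special point $p$ of $T$. A column lying strictly to the right of $q$ in $\ip(T,e)$ is a (possibly reindexed) column of $T$ whose bottom cell lies Northeast of $e$, hence Northeast of $p$; by the maximality of $p$ that bottom cell carries no point. Since the insertion of a single row or column creates a point only at $q$ and turns no previously empty cell into a pointed one, these columns keep empty bottoms in $\ip(T,e)$. This is fact (ii), so $q$ is the special point.

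Suppose now we are in case~(2), so that $e$ is Southwest of $p$ (or is the lower edge of the special cell). Then $q$ is placed weakly to the left of $p$, and the ribbon is exactly what clears the columns in between: running from just right of $q$ up to just below $p$, it gives a new --- and necessarily empty --- bottom cell to every column strictly between $q$ and $p$ and to the column of $p$ itself, so that $p$ and every pointed bottom cell in this range ceases to be a bottom cell. The columns still further to the right are untouched, and being Northeast of $p$ they have empty bottoms by maximality of $p$. Hence no column to the right of $q$ has a pointed bottom cell, giving (ii). The instruction ``do nothing'' is the degenerate subcase in which $e$ is the lower edge of $p$: here $q$ is inserted directly below $p$ in the same column, $p$ loses its bottom status at once, and no ribbon is required.

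The conceptual content of each case is transparent; I expect the main obstacle to be the geometric bookkeeping rather than any hard idea: turning the Southwest/Northeast comparison between the edge $e$ and the point $p$ into genuine inequalities on column indices, tracking the reindexing of columns caused by a column insertion, and verifying that the ribbon does supply a fresh empty bottom cell to precisely the columns ranging from just right of $q$ through the column of $p$.
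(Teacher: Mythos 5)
Your argument is correct and follows essentially the same route as the paper's proof: reduce the claim to showing that no column strictly to the right of the new point has a pointed bottom cell, then treat case (1) by observing those columns are untouched columns of $T$ lying Northeast of the old special point, and case (2) by noting the ribbon supplies fresh empty bottom cells to the columns between the new point and the old special point while the columns further right already had unpointed bottoms. Your version just spells out the geometric bookkeeping (column reindexing, the degenerate ``do nothing'' subcase) that the paper leaves implicit.
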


\begin{proof}
 Notice that the new point $p'$ is at the bottom of a column of $T'$, so we must prove that the columns of $T'$ which are to the right of $p'$ do not have a bottom point.  If we are in case (1) of Definition~\ref{defi:insertion}, this is clear since these columns are the same as in $T$ and they are to the right of the special point $p$ of $T$. In case (2), all columns of $T'$ strictly to the right of $p$ and weakly to the left of $p'$ have a bottom cell coming from the added ribbon, and therefore contain no point. Since columns to the right of $p$ contain no bottom points either since $p$ is the special point of $T$, the proof is complete.
\end{proof}

\begin{figure}[!ht]
\begin{center}
  \includegraphics[width=\textwidth]{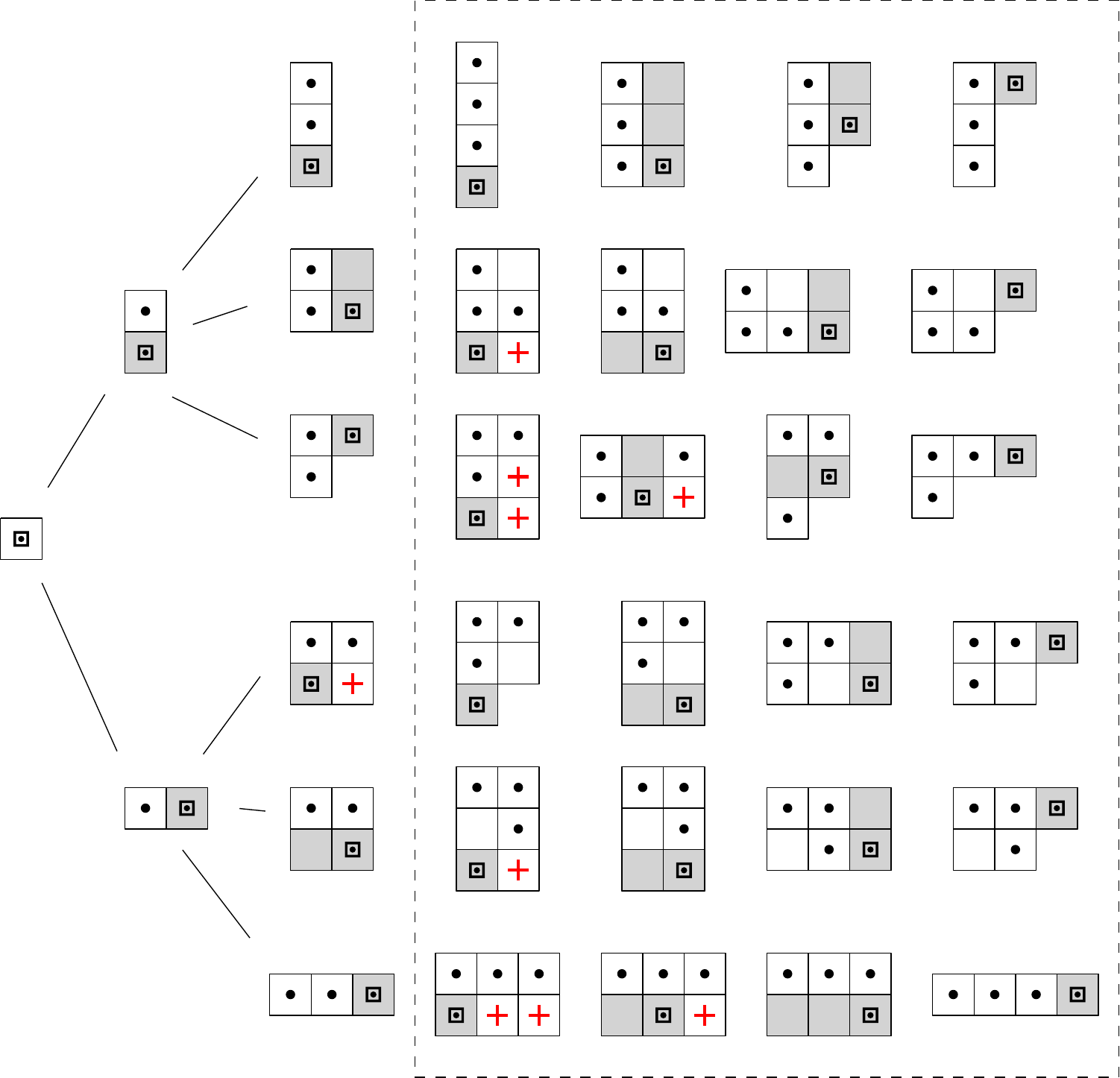}
\caption{Generating tree-like tableaux via $\ip$. \label{fig:generationtab4}}
\end{center}
\end{figure}

We can now give the proof of Theorem~\ref{thm:insertion}:

\begin{proof}[Proof of Theorem~\ref{thm:insertion}]
  We first define a function $\rp$;  we will then prove that it is the desired inverse of $\ip$. Let $T$ be a tableau of size $n+1$ and consider the cell $c$ containing the special point. In case there is a cell adjacent to the right of $c$, then follow the boundary cells to the Northeast of $c$ and let $c'$ be the first cell encountered which has a point (that cell exists since the last column possesses at least one point); we then remove the ribbon of empty cells comprised strictly between $c$ and $c'$. This leaves a Ferrers diagram since $c'$ is not the bottom cell of its column. Coming back to the general case, delete now the row or column which contains $c$ but no other points: let $T_1$ be the resulting tableau, and $e$ be the boundary edge of $T_1$ which is adjacent to $c$ in $T$. We define $\rp(T):=(T_1,e)$; $T_1$ has clearly size $n$ and $e$ is one of its boundary edges, and we claim $\rp$ is the desired inverse to the function $\ip$. 

It is clear that if $T'=\ip(T,e)$, then  $\rp(T')=(T,e)$: this is a consequence of Lemma~\ref{lemma:fundamental}. Let us now prove that $\ip\circ \rp$ is the identity on $\TLT_{n+1}$. So let $T\in \TLT_{n+1}$, with $c$ its special cell, and let $(T_1,e):=\rp(T)$. 
 Suppose first $c$ lies at the end of a row. In this case the special point of $T_1$ must be to the Southwest of $e$, therefore no ribbon will be added in $\ip(T_1,e)$ and this last tableau is thus clearly $T$. Now suppose there is a cell just to the right of $c$: in this case the cell $c'$ in the definition of $\rp$ contains the special point of $T_1$, since the removal of the ribbon will turn $c'$ into a bottom cell of a column. Now $e$ will be to the left of $c'$ in $T_1$, and so the application of $\ip$ will add the removed ribbon: in this case also $\ip(T_1,e)=T$, and this achieves the proof.
\end{proof} 

Since $|\TLT_1|=1$ we have the immediate corollary:

\begin{cor}
 $|\TLT_n|=n!$ for any $n\geq 1$.
\end{cor}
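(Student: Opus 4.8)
The plan is to deduce the formula by a one-line induction on $n$, feeding off the bijection of Theorem~\ref{thm:insertion}. The whole substance of the argument has already been done in establishing that $\ip$ is a bijection; the corollary is merely a counting consequence, so I would aim to keep it short and make the recurrence transparent.

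First I would pin down the cardinality of side (A) in Theorem~\ref{thm:insertion}. By the discussion following Definition~\ref{defi:tlt}, a tree-like tableau of size $n$ has half-perimeter $n+1$, and by the Basic Definitions the half-perimeter of a Ferrers diagram equals its number of boundary edges. Hence every $T\in\TLT_n$ carries exactly $n+1$ boundary edges $e$, so the set of pairs $(T,e)$ in item (A) has cardinality $(n+1)\,|\TLT_n|$. Since Theorem~\ref{thm:insertion} asserts that $\ip$ is a bijection from (A) onto (B)$=\TLT_{n+1}$, I immediately obtain the recurrence
\[
|\TLT_{n+1}| = (n+1)\,|\TLT_n| \qquad (n\geq 1).
\]

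Next I would close the induction. The base case is $|\TLT_1|=1=1!$, which holds because the single cell must carry the root point. Assuming $|\TLT_n|=n!$, the recurrence gives $|\TLT_{n+1}|=(n+1)\cdot n!=(n+1)!$, completing the induction and proving $|\TLT_n|=n!$ for all $n\geq 1$.

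There is no real obstacle here: all the difficulty is already absorbed into the proof of Theorem~\ref{thm:insertion}, and the only point requiring a moment's care is the bookkeeping that a size-$n$ tableau has precisely $n+1$ boundary edges, so that side (A) is counted as $(n+1)|\TLT_n|$ rather than some other multiple. Once that identification between half-perimeter and number of boundary edges is invoked, the statement is a formal consequence of the bijection together with the initial value $|\TLT_1|=1$.
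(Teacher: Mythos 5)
Your proof is correct and follows exactly the route the paper intends: the recurrence $|\TLT_{n+1}|=(n+1)|\TLT_n|$ read off from the bijection of Theorem~\ref{thm:insertion}, combined with the base case $|\TLT_1|=1$. The paper simply states this as an immediate consequence, and your bookkeeping of the $n+1$ boundary edges matches the count of side (A) in the theorem.
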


 So we have an elementary proof that tableaux of size $n$ are equinumerous with permutations of length $n$. In fact, many bijections can be deduced from $\ip$; we will describe two such bijections in Sections~\ref{sub:bijection1} and~\ref{sub:bijection2}.

\subsection{Symmetric tableaux}
In this section we consider \emph{symmetric tableaux}, i.e. tree-like tableaux which are invariant with respect to reflection through the main diagonal of their diagram; see an example in Figure~\ref{fig:BtoA}, left. Symmetric tree-like tableaux are in bijection with symmetric alternative tableaux from~\cite[Section 3.5]{Nad}, and ``type B permutation tableaux'' from ~\cite{LamWil}. 

The size of such a tableau is necessarily odd, and we denote by $\TLT_{2n+1}^{sym}$ the set of symmetric tableaux of size $2n+1$. $\TLT_{2n+1}^{sym}$ has cardinality $2^nn!$, as was shown in~\cite{LamWil,Nad}; we will give here a simple proof of this thanks to a modified insertion procedure.

\begin{figure}[!ht]
 \begin{center}
  \includegraphics[width=0.8\textwidth]{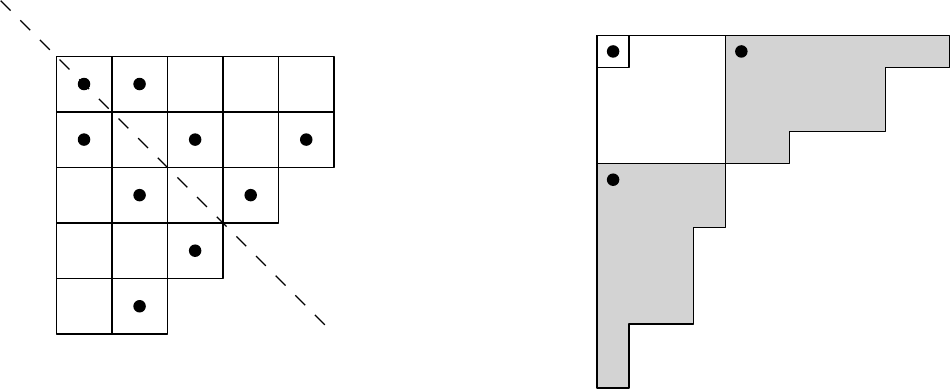}
  \caption{A symmetric tableau, and the embedding of tree-like tableaux in symmetric tableaux. \label{fig:BtoA}}
 \end{center}
\end{figure}

Note that given a tree-like tableau $T$ of size $n$, one can associate to it a symmetric tableau as follows: if $T$ has $k$ columns, then add on top of it a $k\times k$ square of cells, where only the top left cell is pointed; then add to the right of the square the reflected tableau $T^*$: see Figure~\ref{fig:BtoA}, right. In this way we embed naturally $\TLT_n$ in $\TLT_{2n+1}^{sym}$.

We now define a modified point insertion $\ip^*$ for symmetric tableaux. First let us call {\em $*$-special point} of a symmetric tableau the point at the bottom of a column which is Northeast-most \emph{among those that are Southwest of the diagonal}; we will call edges and cells below the diagonal the \emph{lower} edges and points. 

\begin{defi}[Symmetric insertion]
\label{defi:insertion_sym}
 Let $T\in\TLT_{2n+1}^{sym}$ and $(e,\eps)$ be a pair consisting of a lower boundary edge $e$ and $\eps\in\{+1,-1\}$. Define a first tableau $T'$ by inserting a row/column at $e$ with a point at the end, as well as the symmetric column/row. There are then three cases:

(1) If $\eps=+1$ and $e$ is Northeast of the $*$-special point, simply define $\ip^{*}(T,e,+1):=T'$.

(2) If $\eps=+1$ and $e$ is Southwest of the $*$-special point, add a ribbon to $T'$ between the new point (Southwest of the diagonal) and the $*$-special point of $T$ below the diagonal; add also the symmetric ribbon. If $T''$ is the resulting tableau, then define $\ip^{*}(T,e,+1):=T''$.

(3) If $\eps=-1$, add a ribbon in $T'$ between the two new points, and the resulting tableau is by definition $\ip^{*}(T,e,-1)$.\medskip
 \end{defi}

\begin{figure}[!ht]
 \begin{center}
  \includegraphics[width=\textwidth]{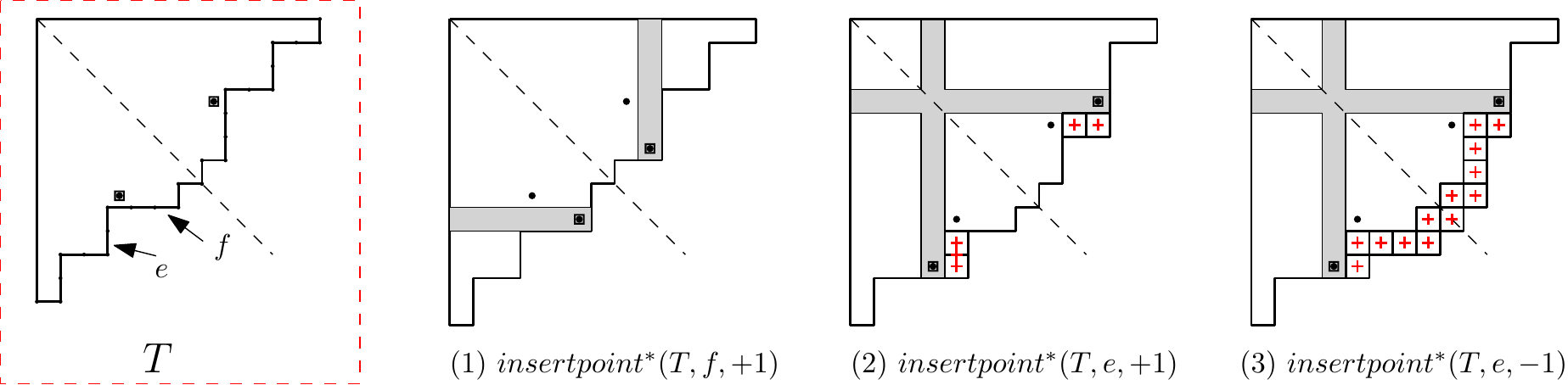}
  \caption{The three cases in the definition of $\ip^*$. \label{fig:syminsert}}
 \end{center}
\end{figure}

Examples of all $3$ cases are given in Figure~\ref{fig:syminsert}. It is easy to check that when only $\eps=+1$ is chosen during insertions, then the insertion produces precisely those symmetric tableaux given by the embedding of usual tree-like tableaux pictured in Figure~\ref{fig:BtoA}. Hence the symmetric insertion $\ip^{*}$ is a generalization of $\ip$.

\begin{thm}
\label{thm:insertionB}
 The procedure $\ip^{*}$ is a bijection between the set of triplets $(T,e,\eps)$ as in Definition~\ref{defi:insertion_sym} and $\TLT_{2n+3}^{sym}$.

\end{thm}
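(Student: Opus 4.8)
The plan is to mimic the proof of Theorem~\ref{thm:insertion} as closely as possible, constructing an explicit inverse $\rp^*$ to $\ip^*$ and checking that the two compositions are the identity. The key structural fact I would establish first is the exact analogue of Lemma~\ref{lemma:fundamental}: after applying $\ip^*$, the $*$-special point of the resulting symmetric tableau is precisely the new \emph{lower} point (the one strictly Southwest of the diagonal) added during the insertion. This should follow in all three cases by the same column-bottom argument as in Lemma~\ref{lemma:fundamental}, restricted to the region below the diagonal; in case (3), where a ribbon is added between the two new points across the diagonal, I need to verify that this ribbon fills in the bottoms of all intermediate columns so that no column strictly Northeast of the new lower point (but still below the diagonal) retains a bottom point.

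Next I would define $\rp^*$ on a symmetric tableau $S\in\TLT^{sym}_{2n+3}$. Let $c$ be the cell of the $*$-special point. The recovery should split according to whether the symmetric partner of $c$ lies to its Northeast or whether a ribbon crosses the diagonal. The natural recipe is: first, if there is a cell to the right of $c$, follow the boundary Northeast as in $\rp$ and remove the empty ribbon up to the first pointed cell encountered \emph{below the diagonal}, and symmetrically remove the reflected ribbon; then delete the lower row/column containing $c$ together with its symmetric column/row; the edge $e$ is read off as the boundary edge adjacent to $c$, and the sign $\eps$ is determined by whether the removed configuration was the diagonal-crossing ribbon of case~(3) ($\eps=-1$) or the below-diagonal ribbon of cases~(1)/(2) ($\eps=+1$). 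The main bookkeeping is to check that $\rp^*$ always returns a legal symmetric tableau of size $2n+1$ together with a valid pair $(e,\eps)$, and in particular that symmetry is preserved at every step.

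The verification that $\rp^*\circ\ip^*=\mathrm{id}$ is then immediate from the $*$-special point lemma, exactly as the corresponding half of Theorem~\ref{thm:insertion}: knowing that the inserted lower point becomes the $*$-special point tells $\rp^*$ precisely which rows/columns and ribbons to strip, and the sign is recovered by inspecting whether the stripped ribbon crossed the diagonal. For $\ip^*\circ\rp^*=\mathrm{id}$, I would take $S\in\TLT^{sym}_{2n+3}$ with $*$-special cell $c$ and case on the local geometry at $c$: whether $c$ ends a lower row (no ribbon was added, $\eps=+1$, case~(1)), whether there is a cell to the right of $c$ staying below the diagonal (a below-diagonal ribbon was removed, $\eps=+1$, case~(2)), or whether the cell to the right of $c$ lies on/above the diagonal so that the removed ribbon joins the two symmetric new points ($\eps=-1$, case~(3)). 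In each case reinserting according to Definition~\ref{defi:insertion_sym} rebuilds exactly the removed cells.

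The hard part will be case~(3) and the diagonal itself. Unlike the asymmetric situation, the ribbon in case~(3) connects a point below the diagonal to its mirror image above, so I must argue carefully that this single connected ribbon is symmetric, contains no $2\times2$ square even as it crosses the diagonal, and that after removal the shape is again a valid symmetric Ferrers diagram with the two new points becoming genuinely deletable. I also need to confirm that the trichotomy on $\eps$ and the position of $e$ relative to the $*$-special point is exhaustive and mutually exclusive, so that $\rp^*$ unambiguously recovers $\eps$; the sign $\eps=-1$ must correspond exactly and only to the diagonal-crossing configuration, which is the delicate point where the $+1$ and $-1$ branches could a priori be confused if the $*$-special point sits adjacent to the diagonal.
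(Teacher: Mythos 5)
Your proposal follows essentially the same route as the paper: the same generalization of Lemma~\ref{lemma:fundamental} (the new lower point becomes the $*$-special point), the same inverse map that strips the ribbon Northeast of the $*$-special cell and recovers $\eps$ from whether the first pointed cell reached is a lower cell or the mirror of the $*$-special cell, and the same two-composition check. The paper leaves the verifications you flag (case (3), the diagonal, exhaustiveness of the trichotomy) to the reader, so your extra care there is welcome but not a departure in method.
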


\begin{proof}
The key remark is the following natural generalization of Lemma~\ref{lemma:fundamental}: \emph{the new lower point inserted by $\ip^{*}$ is the $*$-special point of the resulting tableau}. In cases (1) and (2) of Definition~\ref{defi:insertion_sym}, the proof is the same as in Lemma~\ref{lemma:fundamental}, and this is clear in case (3).

The inverse of $\ip^*$ is defined as follows: given $T\in\TLT_{2n+3}^{sym}$, find its lower special point. If there is an empty cell to its right, follow the ribbon to the Northeast until the next pointed cell $c$. If $c$ is a lower cell, remove the ribbon of empty cells and its symmetric, and define $\eps:=1$; otherwise $c$ must be the cell symmetric to the lower special cell, and in this case remove from $T$ the ribbon of empty cells and let $\eps:=-1$. If there is no empty cell to the right of the lower special point, let $\eps:=+1$. For all cases, remove the row (\emph{resp.} column) which contains the special point and no other point, and let $e$ be the right (\emph{resp.} bottom) edge of the special point which remains in the resulting tableau $T'$. Then $T\mapsto (T',e,\eps)$ is the inverse of the insertion $\ip^{*}$: the proof is essentially the same as in the case of $\ip$ and is left to the reader.
\end{proof}

 We have the following immediate enumerative consequence:
 \begin{cor}
For  $n\geq 0$, 
 $$|\TLT_{2n+1}^{sym}|=2^n\,n!\,.$$
\end{cor}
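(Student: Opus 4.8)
The plan is to proceed by induction on $n$, using Theorem~\ref{thm:insertionB} to convert the bijection $\ip^*$ into a recurrence for the cardinalities $|\TLT_{2n+1}^{sym}|$. The base case is $n=0$: the only tree-like tableau of size $1$ is a single pointed cell, which is trivially symmetric, so $|\TLT_{1}^{sym}|=1=2^0\cdot 0!$.

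For the inductive step, I would count the triplets $(T,e,\eps)$ of Definition~\ref{defi:insertion_sym}. Since $\ip^*$ is a bijection onto $\TLT_{2n+3}^{sym}$, we have
$$|\TLT_{2n+3}^{sym}| = \#\{(T,e,\eps) : T\in\TLT_{2n+1}^{sym},\ e \text{ a lower boundary edge of } T,\ \eps\in\{+1,-1\}\}.$$
For each fixed $T$ there are exactly two choices of $\eps$, so the whole problem reduces to counting the lower boundary edges of a fixed $T\in\TLT_{2n+1}^{sym}$.

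The key step is the following shape count. A tableau in $\TLT_{2n+1}^{sym}$ has half-perimeter $2n+2$, hence exactly $2n+2$ boundary edges. Its diagram is invariant under reflection through the main diagonal, and this reflection exchanges horizontal and vertical boundary edges. Because the diagonal makes an angle of $45^\circ$ it cannot run along any axis-parallel boundary edge, so the boundary path crosses it transversally at a single point; that crossing point must be a lattice corner, since a fixed point of the reflection lying in the interior of an edge would force that edge to be mapped to itself, contradicting the exchange of horizontal and vertical edges. Consequently the $2n+2$ boundary edges split into $n+1$ edges strictly Southwest of the diagonal and $n+1$ strictly Northeast of it, so $T$ has exactly $n+1$ lower boundary edges.

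Combining the two counts gives the recurrence $|\TLT_{2n+3}^{sym}| = 2(n+1)\,|\TLT_{2n+1}^{sym}|$, and together with the base case an immediate induction yields $|\TLT_{2n+1}^{sym}| = 2^n\,n!$, as claimed. I expect the only delicate point to be the geometric argument that each symmetric diagram has exactly $n+1$ lower boundary edges; once this is granted, the recurrence and its resolution are entirely routine.
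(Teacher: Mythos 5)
Your proof is correct and follows exactly the route the paper intends: the corollary is stated as an immediate consequence of Theorem~\ref{thm:insertionB}, namely the recurrence $|\TLT_{2n+3}^{sym}|=2(n+1)\,|\TLT_{2n+1}^{sym}|$ obtained by counting triplets $(T,e,\eps)$, with base case $|\TLT_{1}^{sym}|=1$. Your careful justification that a symmetric tableau of size $2n+1$ has exactly $n+1$ lower boundary edges is a detail the paper leaves implicit (it simply asserts that $\ggrow{T}$ has cardinality $2(n+1)$), and your argument for it is sound.
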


\subsection{Refined enumeration}
\label{sub:refined_enum}

We now show how our insertion procedures give elementary proofs of some enumerative results on tableaux. 

Let $T_n(x,y)$ be the polynomial 
\[
 T_n(x,y)=\sum_{T\in \TLT_n} x^{\dleft(T)}y^{\dtop(T)},
\]
where $\dleft(T)$ and $\dtop(T)$ are respectively the number of left points and top points in $T$. When we insert a point in a tableau $T$ of size $n$, then we get an extra left (respectively right) point in the resulting tableau if the Southwest-most edge (resp. Northeast-most edges) is picked, while for other boundary edges the number of top and left points remains the same.

 This gives immediately the recurrence relation  $T_{n+1}(x,y)=(x+y+n-1)T_n(x,y)$ which together with $T_1(x,y)=1$ gives:
\begin{equation}
\label{eq:refined}
 T_n(x,y)=(x+y)(x+y+1)\cdots (x+y+n-2).
\end{equation}
 This formula was proved in~\cite{CorNad} and then bijectively in~\cite{Nad}; the proof just given is arguably the simplest one, and is bijective. 

We can also give a generalization of Formula~\eqref{eq:refined} to symmetric tableaux \cite{LamWil,CorKim}. Following~\cite[Section 5]{CorKim} --reformulated in terms of tree-like tableaux--, we define
\[
 T^{sym}_{2n+1}(x,y,z)=\sum_{T\in \TLT^{sym}_{2n+1}} x^{\dleft(T)}y^{\dtop^*(T)}z^{\diag(T)},
\]
where $\diag(T)$ is the number of crossings among the diagonal cells; for $\dtop^*(T)$, consider the northernmost non-root point $p$ in the first column, then the number of points on the row of $p$ is by definition $\dtop^*(T)$.

Let $T'=\ip^{*}(T,e,\eps)$ be as in Theorem~\ref{thm:insertionB}. One has $diag(T')=diag(T)+1$ when $\eps=-1$, and $diag(T')=diag(T)$ when $\eps=+1$. Also
$\dleft(T')=\dleft(T)+1$ when $e$ is the Southwest-most edge while $\dleft(T')=\dleft(T)$ for other choices of $e$.
 Finally, if the row $r$ considered in the definition of $\dtop^*(T)$ has its boundary edge $e'$ Southwest of the diagonal, then the insertion at $e=e'$ increases $\dtop^*(T)$ by one, while all other choices for $e$ leave $\dtop^*(T)$ invariant; if $r$ has its boundary edge $e'$ Northeast of the diagonal, then the column $c$ symmetric to $r$ ends below the diagonal at a boundary edge $e''$, and then the insertion at $e=e''$ increases $\dtop^*(T)$ by one while the other choices for $e$ leave $\dtop^*(T)$ invariant.

Putting things together we obtain the recurrence formula
 \[T^{sym}_{2n+3}(x,y,z)=(1+z)(x+y+n-1)T^{sym}_{2n+1}(x,y,z),\]
 from which it follows:
\begin{equation}
\label{eq:refinedB}
 T^{sym}_{2n+1}(x,y,z)=(1+z)^n(x+y)(x+y+1)\cdots (x+y+n-2).
\end{equation}

This proof is much simpler than any of the two proofs given in~\cite{CorKim}. Note also that $diag(T)=0$ means precisely that $T$ is of the form given on the right of Figure~\ref{fig:BtoA}, from which one gets
\[
  T_n(x,y)=T^{sym}_{2n+1}(x,y,0)
\]
and thus~\eqref{eq:refinedB} can be seen as an extension of~\eqref{eq:refined}.

\section{Enumeration of crossings and cells}
\label{sec:crossperm}

We denote by $\grow{T}$ the set of tableaux $\{\ip(T,e)\}$ where $e$ goes through all boundary edges of $T$. Theorem~\ref{thm:insertion} expresses that 
$\grow{T}$ has cardinality $n+1$ when $T$ has size $n$, and that we have the disjoint union
 \begin{equation}
  \label{eq:decompA}
  \TLT_{n+1}=\bigsqcup_{T\in\TLT_n}\grow{T}.
 \end{equation}
Similarly we denote by $\ggrow{T}$ the set of tableaux $\{\ip^*(T,e,\eps)\}$ where $e$ goes through lower boundary edges of $T$ and $\eps=\pm 1$. Theorem~\ref{thm:insertionB} expresses that 
$\ggrow{T}$ has cardinality $2(n+1)$ when $T$ has size $2n+1$, and that we have the disjoint union
\begin{equation}
  \label{eq:decompB}
\TLT^{sym}_{2n+3}=\bigsqcup_{T\in\TLT^{sym}_{2n+1}}\ggrow{T}
\end{equation}

Given a tableau $T=T^n$ of size $n$, there is a unique sequence of tableaux $\left(T^1,T^2,\ldots,T^{n-1},T^n=T\right)$ such that $T^i=\ip(T^{i-1},e^i)$ for certain boundary edges $e^i\in T^i$. Necessarily $T^i$ has size $i$ for all $i$ and in particular $T^1$ is the unique tableau of size $1$. We refer to this sequence of tableaux as the \emph{insertion history} of $T$. 

In this section we will analyze the mean number of crossings and cells that tree-like tableaux have, by analyzing how these quantities evolve through the insertions procedures of the previous section.

\subsection{Unrestricted tree-like tableaux}

Crossings of tree-like tableaux are an important statistic of these objects: They correspond to superfluous ones in permutation tableaux (cf.~Proposition~\ref{prop:reformulations}) and the work of Corteel and Williams~\cite{CorWil_Markov,CorWil_Tableaux} shows that this statistic is involved in the study of the PASEP.

 It turns out that crossings are particularly well-behaved with respect to our insertion procedure, as the following shows.

\begin{lemma}
\label{lem:crossings_ribbons}
Let $T\in \TLT_n$. The crossings of $T$ are the ribbon cells added in its insertion history.
\end{lemma}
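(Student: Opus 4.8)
We need to show that for any tree-like tableau $T$ of size $n$, the crossings of $T$ (empty cells with a point both above and to the left) are exactly the ribbon cells that were added during the insertion steps in $T$'s unique insertion history.

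**My plan: track crossings one insertion at a time.** The plan is to prove this by induction on $n$, analyzing a single insertion step $T'=\ip(T,e)$ and showing that the crossings of $T'$ are precisely the crossings of $T$ (inherited unchanged) together with the cells of the newly added ribbon. Since the insertion history is built by repeatedly applying $\ip$, and the base case $n=1$ has no crossings and an empty history, establishing this one-step statement immediately yields the full claim: every crossing of $T^n$ either was already a crossing of $T^{n-1}$ (hence, inductively, a ribbon cell added earlier) or is a ribbon cell added at the final step.

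**The key one-step computation.** The hard part is to verify, for $T'=\ip(T,e)$, the two-sided claim: (i) every ribbon cell added is a crossing of $T'$, and (ii) no new crossings are created other than ribbon cells, and no old crossing is destroyed. For (i): in case (2) of Definition~\ref{defi:insertion} the ribbon runs from just right of the new point $p'$ to just below the special point $p$ of $T$; each ribbon cell has the new point $p'$ (or a point in its row/column) to its left and the relevant column point above, so by construction each such empty ribbon cell sits with a point above and a point to its left, making it a crossing. The cleanest way to see this uniformly is via the tree interpretation of Remark~\ref{rem:tree} together with the position of the special point, but a direct inspection of the ribbon's geometry suffices. For (ii), I would argue that inserting a row/column at $e$ and pointing its extremal cell, followed by gluing a ribbon along the boundary, does not place a point above-and-left of any previously empty non-ribbon cell: the inserted row/column lies along the boundary, and by Lemma~\ref{lemma:fundamental} the new point becomes the special point of $T'$, so the inserted point sits at the bottom of its column and cannot serve as the "above" point for any cell strictly below it, nor create a left-point for cells in earlier columns. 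The cells of $T$ that were not crossings thus remain non-crossings, and the cells that were crossings keep their witnessing points, since insertion only shifts existing cells consistently.

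**Anticipated main obstacle.** The delicate point will be case (2), where the added ribbon and the newly pointed cell interact: one must check carefully that the ribbon cells are genuinely \emph{empty} crossings (not pointed), that they each acquire both a point above and a point to the left, and crucially that the insertion does not retroactively turn some previously empty, non-crossing boundary cell of $T$ into a crossing of $T'$ (for instance along the column of the special point $p$). I expect the bulk of the verification to be this geometric bookkeeping around the special point $p$ and the new point $p'$; I would handle it by appealing to Lemma~\ref{lemma:fundamental} to pin down where $p'$ sits and then tracing, column by column between $p$ and $p'$, which cells gain an "above" point and which gain a "left" point, confirming that exactly the ribbon cells are the newly created crossings while every pre-existing crossing of $T$ survives unchanged.
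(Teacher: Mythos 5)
Your proposal is correct and follows essentially the same route as the paper: analyze a single step $T'=\ip(T,e)$, check that the empty cells of the inserted row/column are not crossings and that the status of the old cells of $T$ is unchanged, and observe that every ribbon cell is a crossing because each row and column of $T$ already contains a point; induction on the insertion history then finishes the argument. The paper's proof is just a more compressed version of your one-step bookkeeping.
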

\begin{proof}
 Consider $T'=\ip(T,e)$ for any tableau $T$ with $e$ as one of its boundary edges. Notice first that the empty cells of the inserted row or column are not crossings, and that the status of the empty cells of $T$ is left unchanged by this insertion. If a ribbon is inserted, then all cells of the ribbons are clearly crossings, since all rows and columns of $T$ contain at least one point. This shows that the crossings of $T'$ are those coming from $T$ plus the ribbon cells, which achieves the proof.
\end{proof}

Let $T$ be a tableau of size $n$. From the Southwest to the Northeast, we label its boundary edges $e_0(T)$, $\ldots$, $e_n(T)$  and its boundary cells $b_0(T)$, $\ldots$, $b_{n-1}(T)$. We have the following proposition whose easy proof is omitted:
\begin{prop}
\label{prop:bouh}
Let $T\in \TLT_n$ and $i\in\{0,\ldots,n\}$, and consider the tableau $T'=\ip(T,e_i(T))$. Then the special cell of $T'$ is $b_i(T')$. Moreover, if $b_k(T)$ is the special cell of $T$ (where $0\leq k\leq n-1$), then we have $cr(T')=cr(T)$ if $k\leq i$, while $cr(T')=cr(T)+(k-i)$ if $k>i$.
\end{prop}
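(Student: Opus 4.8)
The plan is to verify the two claims of Proposition~\ref{prop:bouh} directly from the definitions of $\ip$ and the special point, using Lemma~\ref{lemma:fundamental} and Lemma~\ref{lem:crossings_ribbons} as the main tools. First I would establish the statement about the special cell. By Lemma~\ref{lemma:fundamental}, the special point of $T'=\ip(T,e_i(T))$ is precisely the point newly inserted at the edge $e_i(T)$. The inserted cell sits exactly at the boundary position corresponding to $e_i$, and since insertion at a boundary edge adds exactly one new boundary cell at that location (shifting the labels of cells Northeast of it up by one), the special cell of $T'$ is the boundary cell labelled $b_i(T')$ in the relabelled tableau. I would make this precise by tracking how the Southwest-to-Northeast labelling of boundary cells in $T'$ relates to that of $T$: the new point occupies the slot between what were $e_{i-1}$ and $e_{i+1}$, hence is the $i$-th boundary cell of $T'$.

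Next I would handle the crossing count, splitting into the two cases of Definition~\ref{defi:insertion} according to whether $e_i$ is Northeast or Southwest of the special point of $T$. Since $b_k(T)$ is the special cell of $T$, the edge $e_i$ is Northeast of the special point exactly when $i \geq k$ (here the comparison is between the boundary edge $e_i$ and the Southeast corner of $b_k$). In the case $i \geq k$ we are in case (1) of the insertion: no ribbon is added, only the inserted row or column, whose empty cells are not crossings by the argument in Lemma~\ref{lem:crossings_ribbons}. Hence $cr(T')=cr(T)$, matching the claim for $k \leq i$. In the case $i < k$ we are in case (2): a ribbon is added, running from just right of the new point to just below the special point $b_k(T)$. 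By Lemma~\ref{lem:crossings_ribbons} every cell of this added ribbon is a crossing, so $cr(T')=cr(T)+(\text{length of the ribbon})$.

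The one genuine computation is to check that the ribbon added in case (2) has exactly $k-i$ cells. I would argue this by counting the columns spanned by the ribbon: the ribbon occupies the bottom cells of precisely those columns lying strictly to the right of the new point (inserted at position $i$) and weakly to the left of the old special point (at position $k$). These columns correspond exactly to the boundary cells with indices $i+1, i+2, \ldots, k$ in the appropriate labelling, giving $k-i$ cells in total. This is where I expect the main obstacle to lie: the ribbon is not necessarily a single horizontal strip, so I must confirm that its total number of cells equals the horizontal span $k-i$ rather than being inflated by vertical steps. The key observation is that a ribbon contains no $2\times 2$ square and each of its cells is a boundary cell of $T'$ lying in a distinct column, so the cell count equals the number of distinct columns it touches, namely $k-i$. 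Putting the two cases together yields $cr(T')=cr(T)$ for $k \leq i$ and $cr(T')=cr(T)+(k-i)$ for $k > i$, as claimed.
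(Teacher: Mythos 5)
The paper itself omits the proof of Proposition~\ref{prop:bouh} (it is announced as ``easy'' and left out), so your overall strategy --- Lemma~\ref{lemma:fundamental} for the location of the new special cell, Lemma~\ref{lem:crossings_ribbons} to reduce the crossing count to counting ribbon cells, then a direct count of the ribbon --- is exactly the natural one, and the first two steps are essentially sound. (One small imprecision: when $i=k$ the edge $e_k$ is the lower edge of the special cell, so you are in case~(2) of Definition~\ref{defi:insertion} with its ``do nothing'' clause rather than in case~(1); either way no ribbon is added, so the conclusion stands.)

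The genuine gap is in the step you yourself flagged: the count of the ribbon. Your resolution --- ``each of its cells lies in a distinct column, so the cell count equals the number of columns it touches'' --- is false. A ribbon with no $2\times 2$ square can perfectly well contain a vertical run of several cells in the \emph{same} column; ``no $2\times2$'' only forbids it from turning back, not from going straight up. Concretely, take $T\in\TLT_3$ of shape $(2,1)$ with points in cells $(1,1)$, $(1,2)$, $(2,1)$. Its special cell is $(1,2)=b_2(T)$, so $k=2$. Inserting at $e_0$ (the bottom edge of $(2,1)$) adds a pointed cell at $(3,1)$ and then a ribbon from $(3,2)$ up to $(2,2)$: two cells, both in column~$2$. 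Here $k-i=2$ is correct, but the ribbon touches only one column, so your argument would give $1$. The correct count can be obtained, for instance, as follows: every cell of the added ribbon is a boundary cell of the resulting tableau $T'$ (a cell of $T'$ diagonally Southeast of a ribbon cell would force a $2\times2$ square inside the ribbon), and these ribbon cells occupy consecutive positions in the Southwest-to-Northeast order of the $n+1$ boundary cells of $T'$. Positions $0,\ldots,i-1$ are occupied by the unchanged cells $b_0(T),\ldots,b_{i-1}(T)$, position $i$ by the new point, and positions $k+1,\ldots,n$ by the unchanged cells $b_k(T),\ldots,b_{n-1}(T)$ (the old special cell keeps its boundary status because the ribbon stops just below it). The ribbon therefore fills exactly positions $i+1,\ldots,k$, i.e.\ has $k-i$ cells. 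Alternatively, one may use that a ribbon has (number of rows)$+$(number of columns)$-1$ cells and identify these rows and columns with the boundary edges $e_{i+1}(T),\ldots,e_k(T)$.
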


\begin{figure}[!ht]
\begin{center} 
 \includegraphics[width=\textwidth]{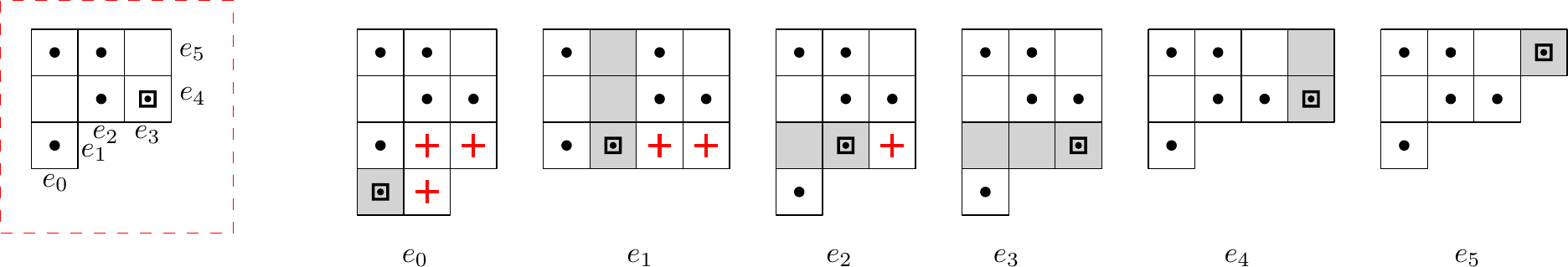}
\caption{A tableau of size $5$ and all $6$ possible point insertions. \label{fig:example_insert}}
\end{center}
\end{figure}

This is illustrated in Figure~\ref{fig:example_insert}, for which $k=3$ and $i$ goes from $0$ to $5$. A first consequence of the proposition is that given $k\in\{0,\ldots,n-1\}$, there are $n!/n=(n-1)!$ tableaux $T$ in $\TLT_n$ where the special cell of $T$ is $b_k(T)$. A second consequence is that given such a tableau $T$, the total number of ribbon cells added when constructing all tableaux in $\grow{T}$ is $1+2+\ldots+k=\binom{k+1}{2}$, and thus the total number of crossings in $\grow{T}$ is $(n+1)cr(T)+\binom{k+1}{2}$. Set $Cr_n=\sum_{T\in\TLT_n} cr(T)$, then we get from~\eqref{eq:decompA} that, for $n\geq 1$,:
\[
 Cr_{n+1}-(n+1)Cr_n=(n-1)!\times\sum_{k=0}^{n-1}\binom{k+1}{2}=(n-1)!\binom{n+1}{3}.
\]
If we let $X_n=Cr_n/n!$, we obtain simply $X_{n+1}-X_n=(n-1)/6$, from which we get:

\begin{prop}
\label{prop:totalcrossings}
The total number of crossings in $\TLT_n$ is given by $Cr_n=n!\times (n-1)(n-2)/12$.
\end{prop}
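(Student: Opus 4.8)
The statement to prove is Proposition~\ref{prop:totalcrossings}, namely $Cr_n = n!\,(n-1)(n-2)/12$. The plan is to set up a recurrence for $Cr_n$ using the disjoint union decomposition~\eqref{eq:decompA} together with the local information about how crossings change under a single insertion, as recorded in Proposition~\ref{prop:bouh}. I would then solve the recurrence by normalizing to $X_n = Cr_n/n!$ and closing the form by a telescoping sum, finally checking the base case.

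\textbf{Key steps.} First I would fix $T \in \TLT_n$ whose special cell is $b_k(T)$ and compute the total number of crossings across the whole fiber $\grow{T} = \{\ip(T,e_i(T)) : 0 \le i \le n\}$. By Proposition~\ref{prop:bouh}, inserting at $e_i$ leaves crossings unchanged when $k \le i$ and adds exactly $k-i$ crossings when $k > i$; summing $\sum_{i=0}^{n}\max(k-i,0) = \sum_{i=0}^{k-1}(k-i) = \binom{k+1}{2}$ shows the extra crossings contributed by the fiber amount to $\binom{k+1}{2}$, so the total crossings in $\grow{T}$ equal $(n+1)\,cr(T) + \binom{k+1}{2}$. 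Next, I would sum over all $T \in \TLT_n$, using~\eqref{eq:decompA} so that the left side becomes $Cr_{n+1}$ and the $(n+1)cr(T)$ terms sum to $(n+1)Cr_n$. For the correction terms I would use the counting fact (itself a consequence of Proposition~\ref{prop:bouh}) that exactly $(n-1)!$ tableaux of size $n$ have their special cell at a given position $b_k$, giving
\[
 Cr_{n+1} - (n+1)Cr_n = (n-1)!\sum_{k=0}^{n-1}\binom{k+1}{2} = (n-1)!\binom{n+1}{3}.
\]
Dividing by $(n+1)!$ and writing $X_n = Cr_n/n!$ collapses this to the clean first-order difference $X_{n+1} - X_n = (n-1)/6$. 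I would then telescope from the base case $X_1 = 0$ (since $\TLT_1$ has a single tableau with no crossings), obtaining $X_n = \tfrac{1}{6}\sum_{j=1}^{n-1}(j-1) = \tfrac{1}{6}\binom{n-1}{2} = (n-1)(n-2)/12$, and multiply back by $n!$ to recover the claimed formula.

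\textbf{Main obstacle.} The genuinely non-routine ingredient is the counting claim that exactly $(n-1)!$ tableaux in $\TLT_n$ have their special cell at each fixed boundary position $b_k$; this is where the bijectivity of $\ip$ (Theorem~\ref{thm:insertion}) does the real work, since Proposition~\ref{prop:bouh} tells us that the position of the special cell of $\ip(T,e_i(T))$ is $b_i(T')$, so the $n+1$ members of each fiber land in $n+1$ \emph{distinct} special-cell positions; combined with $|\TLT_{n+1}| = (n+1)!$ this forces the uniform count $(n+1)!/(n+1) = n!$ at each position in size $n+1$, i.e.\ $(n-1)!$ at each position in size $n$. Once this uniformity is in hand, the remaining work is the purely arithmetic simplification of $\sum_k \binom{k+1}{2} = \binom{n+1}{3}$ and the telescoping, both of which are elementary. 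I would present the derivation compactly, treating the recurrence manipulation as the routine part and flagging the uniform distribution of special-cell positions as the conceptual heart of the argument.
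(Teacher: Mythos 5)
Your proposal is correct and follows essentially the same route as the paper: the fiber count $(n+1)\,cr(T)+\binom{k+1}{2}$ from Proposition~\ref{prop:bouh}, the uniform count of $(n-1)!$ tableaux per special-cell position, the recurrence $Cr_{n+1}-(n+1)Cr_n=(n-1)!\binom{n+1}{3}$, and the telescoping of $X_{n+1}-X_n=(n-1)/6$ are exactly the steps in the paper's proof. Your justification of the uniform distribution of special-cell positions is in fact slightly more explicit than the paper's, which states it without elaboration.
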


This can also be stated as: given the uniform distribution on $\TLT_n$, the expectation of $cr$ is given by $(n-1)(n-2)/12$. This was proved first in~\cite[Theorem 1]{CorHit} by a lengthy computation, which relied on the recursive construction of (permutation) tableaux obtained by adding the leftmost column. 

Now we want to enumerate all cells in the tableaux. Recall first that the Eulerian number $A(n,k)$ is defined as the number of permutations of length $n$ with $k-1$ ascents. In order to analyze the number of cells inserted during the insertion of a row or column, the following proposition is helpful.

\begin{prop}
\label{prop:eulerian}
The number of tree-like tableaux of size $n$ with $k$ rows is given by $A(n,k)$.
\end{prop}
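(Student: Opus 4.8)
The plan is to prove the statement by induction on $n$, using the disjoint decomposition \eqref{eq:decompA} provided by the insertion bijection. Write $a_n(k)$ for the number of tableaux in $\TLT_n$ with exactly $k$ rows, so that the goal becomes $a_n(k)=A(n,k)$. The base case $n=1$ is immediate: $\TLT_1$ consists of a single cell, which has one row, and $A(1,1)=1$ while $A(1,k)=0$ otherwise, matching $a_1(1)=1$.

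The key structural step is to understand how the number of rows changes under $\ip$. First I would record the elementary fact that a Ferrers diagram with $r$ rows and $c$ columns has exactly $r$ vertical boundary edges (one at the right end of each row) and $c$ horizontal boundary edges (one at the bottom of each column). By the definition of row/column insertion, inserting at a vertical edge adds a column, whereas inserting at a horizontal edge adds a row. The only point requiring care is the ribbon possibly added in case (2) of Definition~\ref{defi:insertion}: I claim it changes neither the number of rows nor the number of columns. This follows from a half-perimeter argument: since $\ip(T,e)\in\TLT_{n+1}$ has half-perimeter exactly one more than $T$, the quantity $r+c$ increases by exactly $1$ from $T$ to $\ip(T,e)$; as the inserted row (\emph{resp.} column) already accounts for this $+1$, and the ribbon can only extend existing rows and columns (it cannot decrease any count), both counts are forced to be left unchanged by the ribbon. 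Hence $\ip(T,e)$ has $k+1$ rows when $e$ is horizontal and $k$ rows when $e$ is vertical, where $k$ is the number of rows of $T$.

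Now I would fix $T\in\TLT_n$ with $k$ rows; since its half-perimeter is $n+1$ it has $n+1-k$ columns, hence $n+1-k$ horizontal and $k$ vertical boundary edges. Using \eqref{eq:decompA}, a tableau in $\TLT_{n+1}$ with $j$ rows arises either from some $T\in\TLT_n$ with $j$ rows via one of its $j$ vertical edges, or from some $T\in\TLT_n$ with $j-1$ rows via one of its $(n+1)-(j-1)=n+2-j$ horizontal edges. This yields the recurrence $a_{n+1}(j)=j\,a_n(j)+(n+2-j)\,a_n(j-1)$. On the other hand, expressing $A(n,k)$ in terms of the usual Eulerian numbers (permutations with $k-1$ ascents), the classical recurrence reads $A(n+1,j)=j\,A(n,j)+(n+2-j)\,A(n,j-1)$, which is identical. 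Together with the matching initial conditions, the induction gives $a_n(k)=A(n,k)$, as desired.

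The main obstacle is precisely the structural claim that the edge type alone determines the change in the number of rows, i.e.\ that the ribbon of case (2) is irrelevant to the row/column count; once this is settled by the half-perimeter argument the rest is a routine verification that the two recurrences coincide. One should take care to double-check the Eulerian recurrence against the paper's convention, with $A(n,k)$ counting permutations with $k-1$ ascents, so that all index shifts match exactly.
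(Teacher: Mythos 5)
Your proof is correct and follows essentially the same route as the paper's: decompose $\TLT_{n+1}$ via $\grow{T}$, observe that the $k$ column insertions preserve the row count while the $n+1-k$ row insertions increase it by one, and match the resulting recurrence $a_{n+1}(j)=j\,a_n(j)+(n+2-j)\,a_n(j-1)$ with the Eulerian one. The only difference is that you explicitly justify (via the half-perimeter count) that the ribbon in case (2) of Definition~\ref{defi:insertion} cannot alter the number of rows or columns, a point the paper leaves implicit; this is a worthwhile detail but not a change of method.
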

\begin{proof} 
Suppose $T$ has $k$ rows. Then in $\grow{T}$ there are $k$ tableaux with $k$ rows and $n+1-k$ tableaux with $k+1$ rows, which correspond respectively to a column and a row insertion. From this one deduces that $A(n+1,k)=kA(n,k)+(n+2-k)A(n,k-1)$; this is the familiar recursion followed by Eulerian numbers.
\end{proof}
Introducing the Eulerian polynomial $A_n(t)=\sum_{k=1}^nA(n,k)t^k$, we have 
\[
 A_{n+1}(t)=(n+1)tA_n(t)+t(1-t)A_n'(t),
\]
with initial condition $A_0(t)=1$. If we differentiate this equation twice, and plug in $t=1$ in each case, one obtains equations for $A'_n(1)$ and $A''_n(1)$ which give (this is well-known):
\[
 A_n'(1)=\frac{(n+1)!}{2}\quad\text{and}\quad A''_n(1)=\frac{(n+1)!(3n-2)}{12},
\]
valid for $n\geq 1$ and $n\geq 2$ respectively.

\begin{prop}
\label{prop:nbcellsA}
The average number of cells in a tree-like tableau of size $n$ is  
\begin{equation}
 Y_n=\frac{(n+1)(5n+6)}{24}
\end{equation}
\end{prop}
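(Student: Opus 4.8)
The plan is to compute the average number of cells by tracking how the total number of cells evolves under the insertion procedure $\ip$, in direct analogy with the treatment of crossings. Let $P_n=\sum_{T\in\TLT_n}p(T)$, where $p(T)$ denotes the number of cells of $T$; then the desired average is $Y_n=P_n/n!$. Using the disjoint union~\eqref{eq:decompA}, I would write $P_{n+1}=\sum_{T\in\TLT_n}\sum_{e}p(\ip(T,e))$, so the main task is to understand, for a fixed $T$ of size $n$, the total number of cells added as $e$ ranges over all $n+1$ boundary edges.

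First I would analyze the number of cells inserted at a single edge. Inserting at a boundary edge $e=e_i(T)$ adds a row or a column (depending on whether $e$ is a bottom or right edge of a boundary cell), plus possibly a ribbon. The row/column insertion contributes a number of cells equal to the number of rows above (or columns to the left of) $e$, and the ribbon cells are precisely the crossings created, which by Lemma~\ref{lem:crossings_ribbons} and Proposition~\ref{prop:bouh} total $\binom{k+1}{2}$ when summed over all of $\grow{T}$, where $b_k(T)$ is the special cell. So the increment $P_{n+1}-P_n$ splits into two parts: the contribution from the newly inserted rows/columns, and the contribution from ribbon cells. The ribbon part is already handled: summing over all $T\in\TLT_n$ with special cell at position $k$, and recalling that there are $(n-1)!$ such tableaux for each $k$, the total ribbon contribution is $(n-1)!\binom{n+1}{3}$, exactly as in the crossings computation.

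The new ingredient is the row/column part. Here I expect the Eulerian enumeration of Proposition~\ref{prop:eulerian} to be the crucial tool. When $T$ has $r$ rows and $c$ columns (so $r+c=n+1$, since the half-perimeter is $n+1$), a column insertion at a boundary edge adds a number of cells equal to the number of rows weakly above that edge, and a row insertion adds a number of cells equal to the number of columns weakly to the left; summing the inserted-line cells over all $n+1$ edges of $T$ should give a clean expression in terms of $r$ and $c$ — I anticipate something like a sum $\binom{r+1}{2}+\binom{c+1}{2}$ of the staircase totals for the two directions. Summing this over all $T\in\TLT_n$ then reduces to computing $\sum_{T}\binom{r(T)+1}{2}$ and the symmetric quantity, which by Proposition~\ref{prop:eulerian} are expressible through $A_n'(1)$ and $A_n''(1)$; the stated values $A_n'(1)=(n+1)!/2$ and $A_n''(1)=(n+1)!(3n-2)/12$ are exactly what is needed. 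This is the main obstacle: getting the edge-by-edge bookkeeping of inserted cells correct (carefully distinguishing weak versus strict counts and the row/column asymmetry) so that the sum collapses into the Eulerian moments.

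Putting the two parts together yields a recurrence of the shape $P_{n+1}=(n+1)P_n + (\text{ribbon term}) + (\text{line term})$, which after dividing by $(n+1)!$ becomes a telescoping first-order recurrence for $Y_n$ with an explicitly summable right-hand side. Solving it with the base case $Y_1=1$ (the unique tableau of size $1$ has a single cell) should produce $Y_n=(n+1)(5n+6)/24$, completing the proof. The only genuinely delicate step is the exact determination of the line-insertion contribution in terms of $r$ and $c$; once that is pinned down, everything else is routine summation and simplification using the already-recorded values of $A_n'(1)$ and $A_n''(1)$.
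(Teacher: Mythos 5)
Your approach is essentially the paper's: the paper also sums, edge by edge, the $j$ cells contributed by inserting at the end of the $j$th row or column (giving exactly your $\binom{r+1}{2}+\binom{c+1}{2}$ per tableau), converts the sum over tableaux into Eulerian moments via Proposition~\ref{prop:eulerian} and the symmetry $A(n,k)=A(n,n+1-k)$, and accounts for ribbon cells through the crossings computation. The only cosmetic difference is that the paper runs the recurrence on the number of \emph{non-crossing} cells and adds the already-known average $(n-1)(n-2)/12$ of crossings at the end, whereas you fold both contributions into a single recurrence for the total cell count $P_n$; these are trivially equivalent.

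One concrete point needs fixing: the closed form $A_n''(1)=(n+1)!(3n-2)/12$ holds only for $n\geq 2$ (for $n=1$ one has $A_1(t)=t$, so $A_1''(1)=0$, not $1/6$), hence the resulting recurrence $Y_{n+1}-Y_n=\tfrac{5n+8}{12}$ is valid only for $n\geq 2$. Telescoping from your proposed base case $Y_1=1$ would give $\tfrac{5n^2+11n+8}{24}$, which is off by $\tfrac{1}{12}$ from the stated answer; you must anchor instead at $Y_2=2$ (as the paper does with $AvNcr(2)=2$). Note also that the closed formula itself fails at $n=1$, where it gives $11/12$ rather than $1$.
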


\begin{proof}
 Suppose $T\in\TLT_n$ has $k$ rows, and thus $n+1-k$ columns, and let $ncr(T)$ be its number of \emph{non-crossing} cells. Note that if the edge $e$ is at the end of the $j$th row or column, then $\ip(T,e)$ has $ncr(T)+j$ non-crossing cells since we inserted a column or row with $j$ cells. Hence we have that $ncr(\grow{T})$ is equal to $(n+1)ncr(T)+k(k+1)/2+(n+1-k)(n+2-k)/2$; by summing over all tableaux $T\in\TLT_n$ we get for $n\geq 2$:
\begin{align*}
 ncr(\TLT_{n+1})=&(n+1)ncr(\TLT_n) + \sum_{k=1}^nA(n,k) \! \left( \! \frac{k(k \! + \! 1)}{2} \! + \! \frac{(n \! + \! 1 \! - \! k)(n \! + \! 2 \! - \! k)}{2} \! \right)\\
               =&(n+1)ncr(\TLT_n)+\sum_{k=1}^nA(n,k) k(k+1)\\
               =&(n+1)ncr(\TLT_n)+A''_n(1)+2A_n'(1)\\
               =&(n+1)ncr(\TLT_n)+(n+1)!\frac{3n+10}{12}
\end{align*}
where we used the fact that $A(n,k)=A(n+1-k)$ in the second equality. Dividing both sides by $(n+1)!$ and setting 
$AvNcr(n):=ncr(\TLT_{n})/n!$ we get the equation:
\[
 AvNcr(n+1)=AvNcr(n)+\frac{3n+10}{12}\quad\text{for~ }n\geq 2.
\]

With the initial condition $AvNcr(2)=2$ this gives $AvNcr(n)=1/24(3n^2+17n+2)$, which added to the average number of crossings $(n-1)(n-2)/12$ gives the result.
\end{proof}

\subsection{Symmetric tree-like tableaux}

We will give here analogues of the results of the previous section for symmetric tableaux

\begin{prop}
For $n\geq 1$, the average number of crossings in symmetric tree-like tableaux of size $2n+1$ is given by
\begin{equation}
\label{eq:propcrB}
X^*_{2n+1} = \frac{2n^2+1}{6}.
\end{equation}
\end{prop}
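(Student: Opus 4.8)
The plan is to mimic the proof of Proposition~\ref{prop:totalcrossings}, replacing $\ip$ by $\ip^{*}$ and the decomposition~\eqref{eq:decompA} by~\eqref{eq:decompB}. Write $Cr^{*}_{2n+1}=\sum_{T\in\TLT^{sym}_{2n+1}}cr(T)$, so that $X^{*}_{2n+1}=Cr^{*}_{2n+1}/(2^{n}n!)$. The crucial input is the symmetric analogue of Lemma~\ref{lem:crossings_ribbons}: in each application of $\ip^{*}$ the inserted row/column (and its mirror) create no crossing, the status of the old cells is unchanged, and every ribbon cell added is a crossing. Consequently the crossings of a symmetric tableau are exactly the ribbon cells produced along its $\ip^{*}$-history, and summing $cr$ over $\ggrow{T}$ gives $2(n+1)\,cr(T)$ plus the total number $r(T)$ of ribbon cells created in the $2(n+1)$ insertions issuing from $T$. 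Summing over $T$ and using~\eqref{eq:decompB} yields the recurrence $Cr^{*}_{2n+3}=2(n+1)Cr^{*}_{2n+1}+R_{n}$, where $R_{n}=\sum_{T}r(T)$.

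Next I would compute $r(T)$ for a fixed $T$ whose $*$-special point sits at position $k$ among the $n$ lower boundary cells (with $k\in\{0,\dots,n-1\}$). The $n+1$ insertions with $\eps=+1$ behave, below the diagonal, exactly as in Proposition~\ref{prop:bouh}: insertion at a lower edge strictly Southwest of the $*$-special point, at index $i<k$, adds a ribbon of $k-i$ cells, and the mirror insertion adds the symmetric ribbon, for a total of $2(k-i)$ cells, while the other $\eps=+1$ insertions add none. Hence the $\eps=+1$ insertions contribute $\sum_{i=0}^{k-1}2(k-i)=2\binom{k+1}{2}=k(k+1)$ cells. For the $n+1$ insertions with $\eps=-1$ the ribbon joins the two new, mirror-symmetric points and is therefore independent of the $*$-special point; a direct inspection of case~(3) of Definition~\ref{defi:insertion_sym} shows that the insertion at the lower edge $e_{i}$ (labelled Southwest to Northeast) adds a self-symmetric ribbon of $2(n-i)+1$ cells, crossing the diagonal exactly once, consistent with the fact recorded in Section~\ref{sub:refined_enum} that $\eps=-1$ raises $\diag$ by one. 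Summing these odd numbers gives $\sum_{i=0}^{n}\bigl(2(n-i)+1\bigr)=(n+1)^{2}$ ribbon cells, independently of $k$. Thus $r(T)=k(k+1)+(n+1)^{2}$.

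To finish the evaluation of $R_{n}$ I would establish the symmetric analogue of the counting statement following Proposition~\ref{prop:bouh}, namely that the $*$-special point is equidistributed among its $n$ possible positions. Using the generalized form of Lemma~\ref{lemma:fundamental} proved in Theorem~\ref{thm:insertionB}, the new lower point is the $*$-special point of the image, and tracking its position shows that exactly two of the $2(n+1)$ children of each $T$ realise each position, so that $\#\{T\in\TLT^{sym}_{2n+1}:k(T)=k\}=2^{n}(n-1)!$ for every $k$. Then
\[
 R_{n}=(n+1)^{2}\,2^{n}n!+2^{n}(n-1)!\sum_{k=0}^{n-1}k(k+1)=(n+1)^{2}\,2^{n}n!+2^{n}(n+1)!\,\frac{n-1}{3},
\]
which simplifies to $R_{n}=\tfrac{2n+1}{3}\,2^{n+1}(n+1)!$.

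Finally, dividing the recurrence by $2^{n+1}(n+1)!$ turns it into the first-order relation $X^{*}_{2n+3}-X^{*}_{2n+1}=\tfrac{2n+1}{3}$, valid for $n\geq1$; together with the initial value $X^{*}_{3}=\tfrac12$ (checked by hand on the two symmetric tableaux of size $3$) a telescoping sum gives $X^{*}_{2n+1}=\tfrac12+\tfrac13\sum_{j=1}^{n-1}(2j+1)=\tfrac{2n^{2}+1}{6}$. The two steps I expect to require real care are the determination of the $\eps=-1$ ribbon length $2(n-i)+1$ — this is the only place where the cross-diagonal geometry of case~(3) enters, and it is what makes the $\eps=-1$ contribution remarkably independent of the special point — and the equidistribution of the $*$-special position, which is the symmetric counterpart of the elementary generating-tree argument used in the unrestricted case.
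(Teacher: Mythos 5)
Your proposal is correct and follows essentially the same route as the paper: the same recurrence $Cr^{*}_{2n+3}=2(n+1)Cr^{*}_{2n+1}+R_n$ via the decomposition~\eqref{eq:decompB}, the same per-tableau counts $k(k+1)$ for $\eps=+1$ and $(n+1)^2$ for $\eps=-1$, the same initial value $X^*_3=\tfrac12$, and the same telescoping. The only difference is that you spell out the equidistribution of the $*$-special point's position (which the paper uses implicitly when it writes the factor $2^n(n-1)!$), and your justification of it via the generalized Lemma~\ref{lemma:fundamental} is sound.
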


\begin{proof}
 Assume $n\geq 1$. We denote by $Cr^*_{2n+1}$ the total number of crossings in all symmetric tableaux of size $2n+1$. Let $T$ be such a tableau, and let $i\in\{0,\ldots n-1\}$ be the position of its $*$-special point, and we wish to compute the number of crossings in $\ggrow{T}$. Then, as for unrestricted tableaux, the crossings added by the insertions $\ip^*$ with $\eps=1$ are counted by $2(1+\ldots+i)=i(i+1)$, while the insertions with $\eps=-1$ contribute $1+3+\ldots+(2n+1)=(n+1)^2$ crossings. 

Summing over $T$ we get
\begin{align}
Cr^*_{2n+3} &= 2(n+1)\,Cr^*_{2n+1} + 2^{n}(n-1)!\sum_{i=0}^{n-1}\left(i(i+1) + (n+1)^2\right)\\
            &= 2(n+1)\,Cr^*_{2n+1} + 2^{n}(n-1)! \left(\frac{(n+1)n(n-1)}{3} + n(n+1)^2)\right)
\end{align}
which together with $X^*_{2n+1}=\frac{Cr^*_{2n+1}}{2^nn!}$  leads to:
\begin{equation}
X^*_{2n+3} = X^*_{2n+1} + \frac{n-1}{6} + \frac{n+1}{2}.
\end{equation}
Since $X^*_3=1/2$ we get by summation~\eqref{eq:propcrB}. Note that $X_1^*=0$ while~\eqref{eq:propcrB} would give $1/6$.
\end{proof}

\begin{prop}
\label{prop:Bnk}
The numbers $B(n,k)$ of symmetric tableaux of size $2n+1$ with $k$ diagonal cells obey the following recursion:
\[
 B(n+1,k)=kB(n,k)+(n+1)B(n,k-1)+(n+3-k)B(n,k-2).
\]
with $B(0,1)=1$ and $B(0,k)=0$ if $k\neq 1$.
\end{prop}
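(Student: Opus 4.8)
The plan is to analyze how the number of diagonal cells evolves under the symmetric insertion $\ip^*$, using the disjoint-union decomposition~\eqref{eq:decompB}. Fix a symmetric tableau $T\in\TLT^{sym}_{2n+1}$ and consider the $2(n+1)$ tableaux in $\ggrow{T}$, obtained by ranging over lower boundary edges $e$ and $\eps\in\{+1,-1\}$. The quantity to track is $\diag$, the number of crossings among the diagonal cells, but here the proposition counts \emph{all} diagonal cells of the diagram rather than just the crossing ones; I would first make precise what ``diagonal cells'' means for a symmetric tableau (the cells met by the main diagonal) and observe that if $T$ has $k$ diagonal cells, then its size is $2k-1$, so a tableau in $\TLT^{sym}_{2n+1}$ has exactly $n+1$ diagonal cells when the diagonal is fully occupied; the refined count $B(n,k)$ fixes $k$ as a free parameter, so the recursion must come from how insertion changes the diagonal length.

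The core of the argument is a careful case analysis mirroring the three cases of Definition~\ref{defi:insertion_sym}. First I would determine, for each case and each choice of $e$, how many diagonal cells $T'=\ip^*(T,e,\eps)$ has in terms of the number for $T$. The three structural moves are: inserting a symmetric row/column pair without a ribbon ($\eps=+1$, case (1)); inserting such a pair together with a pair of symmetric ribbons ($\eps=+1$, case (2)); and inserting a single ribbon straddling the diagonal between the two new points ($\eps=-1$, case (3)). The case $\eps=-1$ is the one that adds a new diagonal cell (the ribbon crosses the diagonal), so it should contribute the $(n+1)B(n,k-1)$-type term; the $\eps=+1$ cases either preserve the diagonal length or, via the symmetric ribbon reaching the diagonal, can shorten or lengthen it. I would classify the $n+1$ lower edges according to their position relative to the $*$-special point and relative to the diagonal, read off the change in diagonal-cell count in each sub-case, and then sum the contributions. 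The coefficients $k$, $n+1$, and $n+3-k$ must emerge as the number of lower edges producing each respective shift in the diagonal length.

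The main obstacle I anticipate is the bookkeeping for case (3): when a single ribbon is added between the two symmetric new points, its length and the position where it meets the diagonal depend delicately on where $e$ sits, and this is precisely what produces the split between the $(n+1)B(n,k-1)$ term and the $(n+3-k)B(n,k-2)$ term. I would need to verify that among the relevant edges, exactly $n+1$ of them create a configuration landing in the ``$k-1$'' class and the remaining weighted count lands in the ``$k-2$'' class, while the $k$ edges handled by case (1)/(2) preserve the count at $k$. Matching these multiplicities against the stated coefficients is the heart of the proof; once the per-edge diagonal-cell changes are tabulated, summing over $\ggrow{T}$ and then over all $T\in\TLT^{sym}_{2n+1}$ with $k$ diagonal cells via~\eqref{eq:decompB} gives the recursion directly.

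Finally, I would check the initial condition: the unique tableau in $\TLT^{sym}_1$ is a single pointed cell lying on the diagonal, giving one diagonal cell, hence $B(0,1)=1$ and $B(0,k)=0$ otherwise, consistent with the statement. A sanity check summing $B(n,k)$ over $k$ should recover $|\TLT^{sym}_{2n+1}|=2^n n!$, and specializing should be compatible with the generating-function identity~\eqref{eq:refinedB} for the refined diagonal statistic; I would use these as consistency tests rather than as part of the formal proof.
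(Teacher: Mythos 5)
Your overall strategy -- fix $T\in\TLT^{sym}_{2n+1}$ with $k$ diagonal cells, track how the number of diagonal cells changes over the $2(n+1)$ insertions in $\ggrow{T}$, and sum via~\eqref{eq:decompB} -- is exactly the paper's. But the proposal stops at the point where the proof actually begins: the case analysis whose outcome you say ``must emerge'' is the entire content of the argument, and the guesses you do commit to are wrong. The correct tabulation is as follows. The $n+1$ lower boundary edges split into $k$ edges that are bottom edges of columns (these trigger \emph{row} insertions) and $n+1-k$ edges that are right ends of rows (these trigger \emph{column} insertions); the first count equals $k$ because a column meets the diagonal exactly when its bottom edge lies below the diagonal. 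A row insertion with $\eps=+1$ leaves the number of diagonal cells at $k$; a column insertion with $\eps=+1$ creates one new diagonal cell (the new column and its symmetric row intersect on the diagonal); and taking $\eps=-1$ adds one further diagonal cell in each case, since the straddling ribbon of case (3) crosses the diagonal exactly once. This yields $k$ tableaux with $k$ diagonal cells, $(n+1-k)+k=n+1$ with $k+1$, and $n+1-k$ with $k+2$, which is the recursion.

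Against this, several of your specific assertions fail. The statement ``if $T$ has $k$ diagonal cells, then its size is $2k-1$'' is false: $k$ is determined by the shape, not the size, and ranges over $1,\dots,n+1$. The claim that $\eps=-1$ ``is the one that adds a new diagonal cell'' and therefore produces the $(n+1)B(n,k-1)$ term is incorrect: that term collects $\eps=-1$ row insertions \emph{and} $\eps=+1$ column insertions, while $\eps=-1$ column insertions add \emph{two} diagonal cells and give the $(n+3-k)B(n,k-2)$ term. The suggestion that the $\eps=+1$ ribbons can ``shorten'' the diagonal is impossible (insertion never removes cells), and in fact the symmetric ribbons of case (2) lie strictly off the diagonal, so the position of $e$ relative to the $*$-special point -- the dichotomy you propose to organize the count around -- is irrelevant here; the relevant dichotomy is row-end versus column-end. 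Your verification of the initial condition and the consistency checks are fine, but as written the proposal neither establishes the coefficients nor identifies the correct mechanism producing them.
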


\begin{proof} 
Suppose $T$ has $k$ diagonal cells; then in $\ggrow{T}$ there are:
\begin{itemize}
\item $k$ tableaux with $k$ diagonal cells (row insertion when $\varepsilon=+1$);
\item $(n+1-k)+k=n+1$ tableaux with $k+1$ diagonal cells ( row insertion when $\varepsilon=-1$ or  column insertion when $\varepsilon=+1$);
\item $n+1-k$ tableaux with $k+2$ diagonal cells (column insertion when $\eps=-1$).
\end{itemize}
  and from this one deduces the above recursion.
\end{proof}

Let $T$ be a symmetric tableau of size $2n+1$. 
Since $\ip^*$ increase by 1 the width and the height of symmetric tableaux, the diagonal of $T$ contains less that $n+1$ cells.
We deduce that $B(n,k)!=0$ if and only if $k \in [1,n+1]$.

Introducing the polynomial $B_n(t)=\sum_{k=1}^{n+1}B(n,k)t^k$ the previous recurrence relations become 
\[
 B_{n+1}(t)=(n+1)(t+t^2)B_n(t)+(t-t^3)B_n'(t),
\] 
where $B_0(t)=t$. If we differentiate this equation we get:
\begin{align*}
 B_{n+1}'&=(n+1)(2t+1)B_n+\left((n+1)(t+t^2)+ (1-3t^2)\right)B_n'+(t-t^3)B''_{n};\label{eq:1}\\
\end{align*}
Substituting $t=1$ in this equation one gets:  
\[
B_{n+1}'(1)=3(n+1)B_n(1)+2nB_n'(1);
\]
Introducing $E^{(1)}_n=B_n'(1)/(2^nn!)$ this gives $E^{(1)}_{n+1}=\frac{n}{n+1}E^{(1)}_n+\frac{3}{2}$, which is easily solved:

\begin{prop}
 For $n\geq 1$, the average number of diagonal cells on a symmetric tableau of size $2n+1$ is $E^{(1)}_n=\frac{3(n+1)}{4}$.
\end{prop}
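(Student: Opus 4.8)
The plan is to solve the first-order linear recurrence for $E^{(1)}_n$ that the paper has already derived, namely
\[
E^{(1)}_{n+1}=\frac{n}{n+1}E^{(1)}_n+\frac{3}{2},
\]
and verify that $E^{(1)}_n=\tfrac{3(n+1)}{4}$ satisfies it together with the correct initial condition. First I would establish the base case: since $B_0(t)=t$ we have $B_0'(1)=1$, and from the recursion $B_1'(1)=3\cdot 1\cdot B_0(1)+2\cdot 0\cdot B_0'(1)=3$, so $E^{(1)}_1=B_1'(1)/(2^1\cdot 1!)=3/2$. This matches the proposed formula at $n=1$, where $\tfrac{3(n+1)}{4}=\tfrac{3\cdot 2}{4}=\tfrac{3}{2}$.

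Next I would confirm that the closed form is consistent with the recurrence. The cleanest route is direct substitution: assuming $E^{(1)}_n=\tfrac{3(n+1)}{4}$, the right-hand side of the recurrence becomes
\[
\frac{n}{n+1}\cdot\frac{3(n+1)}{4}+\frac{3}{2}=\frac{3n}{4}+\frac{3}{2}=\frac{3n+6}{4}=\frac{3(n+2)}{4},
\]
which is exactly $E^{(1)}_{n+1}$. Thus by induction on $n\geq 1$ the formula holds for all $n\geq 1$.

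The connection back to the statement is then immediate: the average number of diagonal cells over $\TLT^{sym}_{2n+1}$ is, by definition of $B(n,k)$ as the number of such tableaux with $k$ diagonal cells, the ratio $\sum_k k\,B(n,k)/\sum_k B(n,k)=B_n'(1)/B_n(1)$, and since $B_n(1)=|\TLT^{sym}_{2n+1}|=2^n n!$ this ratio is precisely $E^{(1)}_n$. Hence the average equals $\tfrac{3(n+1)}{4}$.

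I expect no genuine obstacle here, since the heavy lifting (setting up the polynomial recurrence from Proposition~\ref{prop:Bnk} and reducing it to the first-order recurrence for $E^{(1)}_n$ via differentiation at $t=1$) has already been carried out in the excerpt. The only point requiring mild care is the base case: the differentiated recurrence is valid for producing $B_{n+1}'(1)$ from $B_n$, so one should start the induction at $n=1$ with the directly computed value $E^{(1)}_1=3/2$ rather than attempting to use the formula at $n=0$, where $E^{(1)}_0=B_0'(1)/1=1\neq\tfrac{3}{4}$. This mirrors the remark already made for the crossings statistic, where the closed form likewise fails at the smallest index.
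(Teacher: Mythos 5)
Your proposal is correct and follows the paper's own route: the paper derives the recurrence $E^{(1)}_{n+1}=\frac{n}{n+1}E^{(1)}_n+\frac{3}{2}$ and simply asserts it is ``easily solved,'' while you supply the (correct) verification by induction together with the base value $E^{(1)}_1=3/2$. Your remark that the closed form fails at $n=0$ (where $E^{(1)}_0=1\neq 3/4$) is a worthwhile observation consistent with the proposition's restriction to $n\geq 1$.
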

%

Using these results we can then obtain the average number of cells in symmetric tableaux, giving 

\begin{prop}
\label{prop:nbcellsB}
The average number of cells in a symmetric tree-like tableau of size $2n+1$ is 
\begin{equation}
 Y^*_n=\frac{(10n+11)(n+1)}{12}
\end{equation}
for $n\ge1$.
\end{prop}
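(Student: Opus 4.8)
The plan is to imitate the proof of Proposition~\ref{prop:nbcellsA}: decompose the total number of cells as crossings plus non-crossing cells, analyze the non-crossing cells through the insertion $\ip^*$, and then add back the average number of crossings $X^*_{2n+1}=\frac{2n^2+1}{6}$ from~\eqref{eq:propcrB}. Let $Ncr^*_{2n+1}$ be the total number of non-crossing cells over $\TLT^{sym}_{2n+1}$ and set $AvNcr^*(n):=Ncr^*_{2n+1}/(2^nn!)$; then $Y^*_n=X^*_{2n+1}+AvNcr^*(n)$, and everything reduces to determining $AvNcr^*(n)$.

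To set up a recurrence I will use the disjoint union~\eqref{eq:decompB} and compute, for a fixed $T\in\TLT^{sym}_{2n+1}$ with $k$ diagonal cells, the total number $f(k)$ of non-crossing cells added across the $2(n+1)$ insertions forming $\ggrow{T}$. By the symmetric analogue of Lemma~\ref{lem:crossings_ribbons}, the ribbon cells are precisely the crossings, so only the cells of the inserted row/column and of its symmetric image are non-crossing; this count is independent of $\eps$. The combinatorial heart is the classification of the $n+1$ lower boundary edges: exactly $k$ of them are the bottom edges of the columns $c=1,\dots,k$ (each giving a row insertion), while the remaining $n+1-k$ are the right edges of the rows $r=k+1,\dots,n+1$ (each giving a column insertion). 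For a row insertion at column $c$ the new row and its symmetric column are disjoint and contribute $2c$ cells, whereas for a column insertion at row $r$ the inserted column and its symmetric row meet in a single new diagonal cell, contributing $2r+1$ cells. Summing over both signs of $\eps$,
\[
 f(k)=\sum_{c=1}^{k}2(2c)+\sum_{r=k+1}^{n+1}2(2r+1)=2(n+1)(n+3)-2k,
\]
the quadratic terms in $k$ cancelling.

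Summing $f(k)$ over $T\in\TLT^{sym}_{2n+1}$ then only needs the total number of diagonal cells $\sum_T k=B_n'(1)=2^nn!\,E^{(1)}_n=2^nn!\cdot\tfrac{3(n+1)}{4}$ from the preceding proposition. Feeding this into $Ncr^*_{2n+3}=2(n+1)Ncr^*_{2n+1}+\sum_Tf(k)$ and dividing by $2^{n+1}(n+1)!$ gives, for $n\ge1$, the recurrence $AvNcr^*(n+1)=AvNcr^*(n)+\frac{4n+9}{4}$. With the initial value $AvNcr^*(1)=3$ (the two tableaux of size $3$ both have three non-crossing cells) this telescopes to $AvNcr^*(n)=\frac{2n^2+7n+3}{4}$; adding $X^*_{2n+1}=\frac{2n^2+1}{6}$ and simplifying produces $Y^*_n=\frac{10n^2+21n+11}{12}=\frac{(10n+11)(n+1)}{12}$.

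I expect the main obstacle to be the cell bookkeeping in the middle step. One must justify the symmetric analogue of Lemma~\ref{lem:crossings_ribbons} (inserted-line cells are non-crossing, ribbon cells are crossings) and, crucially, account for the unique new diagonal cell created by every column insertion — this is exactly the ``$+1$'' distinguishing $2r+1$ from the row-insertion count $2c$, and it is easy to miss. The identification of which lower edges are row- versus column-insertions, expressed through the number $k$ of diagonal cells, rests on the same analysis underlying Proposition~\ref{prop:Bnk}; and, as with $E^{(1)}_n$, one should record the validity range $n\ge1$.
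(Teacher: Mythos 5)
Your proposal is correct and follows essentially the same route as the paper's own proof: the same per-insertion cell counts ($2c$ for a row insertion at column $c\le k$, $2r+1$ for a column insertion at row $r>k$), the same simplification to $2((n+1)(n+3)-k)$, the same use of $B_n'(1)$ via $E^{(1)}_n=\tfrac{3(n+1)}{4}$, and the same recurrence $AvNcr^*(n+1)=AvNcr^*(n)+n+\tfrac94$ solved from $n=1$ and combined with $X^*_{2n+1}$. Your explicit justification of the initial value and of why ribbon cells are exactly the crossings (so that $\eps$ does not affect the non-crossing count) only makes explicit what the paper leaves implicit.
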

\begin{proof}
Let $T$ be a symmetric tableau of size $2n+1$ with $k$ diagonal cells. The number of non crossing cells in $T$ is noted $ncr(T)$; now we want to compute $ncr(\ggrow{T})$. The row insertions contribute $2\cdot 2\sum_{i=1}^{k}i=2k(k+1)$ cells, while the column insertions contribute $2\cdot \sum_{i=1}^{n+1-k}[2(k+i)+1]=2(n+1-k)(n+k+3)$ cells; therefore
\begin{align*}
  ncr(\ggrow{T}) &=2(n+1)ncr(T)+2\left[k(k+1)+(n+1-k)(n+k+3)\right]\\
                   &=2(n+1)ncr(T)+2((n+1)(n+3)-k)
\end{align*}

We now sum this over all tableaux of size $2n+1$ and get 
\begin{align*}
 ncr(\TLT^{sym}_{2n+3})&=2(n+1)ncr(\TLT^{sym}_{2n+1})+2\sum_{k=1}^{n+1}B(n,k)((n+1)(n+3)-k)\\
                         &=2(n+1)ncr(\TLT^{sym}_{2n+1})+2(n+1)(n+3)2^nn!-2B_n'(1)
\end{align*}

If we note $BvNcr(n)=ncr(\TLT^{sym}_{2n+1})/(2^nn!)$ we get:
\[
 BvNcr(n+1)=BvNcr(n)+(n+3)-\frac{3}{4}=BvNcr(n)+n+\frac{9}{4}
\]
This gives $BvNcr(n) = \frac{2n^2+7n+3}{4}$, for $n \ge 1$, which together with~the crossing numbers \eqref{eq:propcrB} achieves the proof.

\end{proof}

\subsection{Square symmetric tableaux and ordered partitions}

It does not seem that the numbers $B(n,k)$ defined in Proposition~\ref{prop:Bnk} have been studied in all generality. There exist ``Eulerian number of type B'' (A060187 in~\cite{oeis}) but these are different: as shown in~\cite{CJKL}, these numbers $E_B(n,k)$ count symmetric tableaux of size $2n+1$ such that $k$ is the sum of the number of non crossing diagonal cells and {\em half} the number of crossing diagonal cells.

There is a recursive way to compute the exponential generating functions $E_h(x)=\sum_nB(n,n+1-h)x^n/n!$  as follows: 
\begin{equation}
\label{eq:E_h}
 hE_h'(x)=(1-x)E_{h-1}'(x)-E_{h-1}(x)+(h-3)E_{h-2}(x)-xE_{h-2}'(x).
\end{equation}

This is a reformulation of the recurrence of Proposition~\ref{prop:Bnk}. The initial values are $E_{-1}(x)=0$ and $E_0(x)=\sum_{n\geq 0}B(n,n+1)x^n/n!$ which is given by
\begin{equation}
\label{eq:E0}
E_0(x)=\frac{1}{2-\exp(x)}
\end{equation} 

We will prove this bijectively, as a consequence of Theorem~\ref{th:squaresymm} below.
\medskip

Say that a symmetric tableau is {\em square} if its shape is a square Ferrers diagram. For a given size $2n+1$, these are clearly the only tableaux with the maximum number $n+1$ of diagonal cells, and they are thus counted by the numbers $B(n,n+1)$ whose exponential generating function is by definition $E_0(x)$. An \emph{ordered partition} of $\{1,\ldots,n\}$ with $k$ blocks is a partition in $k$ blocks together with a linear ordering of the blocks; the corresponding counting sequence is given in \cite[A000670]{oeis}. We will construct a bijection $\Xi$ from square tableaux to ordered partitions.

Here we will consider only the cells of $T$ that are strictly below the diagonal: this is enough to reconstruct the whole tableau by symmetry, since non-root diagonal cells are necessarily empty in symmetric tableaux. Call these restrictions {\em half tableaux}.

\begin{figure}[!ht]
\begin{center}
 \includegraphics[width=0.4\textwidth]{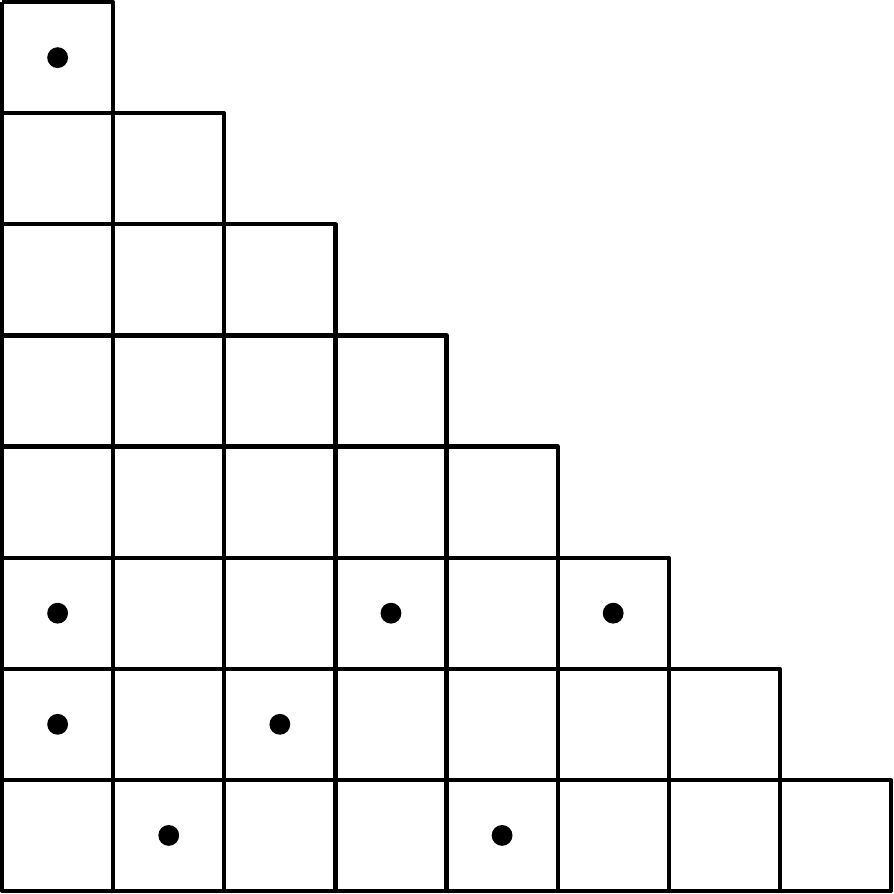}
\caption{A half square tableau of size $8$ \label{fig:half_square}}
\end{center}
\end{figure}

{\bf Bijection $\Xi$: } Let $T$ be a half tableau coming from a square symmetric tableau of size $2n+1$. Therefore $T$ consists of $n$ rows which have $1,2,\ldots,n$ cells from top to bottom, and has $n$ points in total; see Figure~\ref{fig:half_square}. 

Consider the last row of $T$, which has necessarily at least one point; let $B=\{i_1<\ldots<i_m\}$  be the positions where these points appear; here positions are labeled by $1,\ldots,n$ from left to right. Now notice that for $j>1$, column $i_j$ has no point beside the one in the last row, while row $i_j-1$ is empty. Delete all these rows and columns from $T$, together with the last row: after left- and bottom- justifying the remaining cells, the result is clearly a half tableau $T'$ of with $n-m$ points. 

By induction, $\Xi(T')$ is an ordered partition $\pi'=(B'_1,B'_2,\ldots,B'_\ell)$ of $\{1,\ldots,n-m\}$ where $\ell$ is the number of nonempty rows of $T'$; relabel the entries in the blocks of $\pi'$ by applying the only increasing bijection $\{1,\ldots,n-m\}$ to $\{1,\ldots,n\}\backslash \{i_1,\ldots,i_m\}$, and denote by $B_i$ the block $B'_i$ after this relabeling. Then $\Xi(T)$ is defined as the ordered partition $\pi:=(B_1,\ldots,B_\ell,B)$.

\begin{thm}
\label{th:squaresymm}
$\Xi$ is a bijection between:
\begin{itemize}
\item Square symmetric tableau of size $2n+1$, and
\item Ordered partitions of $[n]$. 
\end{itemize}
Moreover if $T$ has $j$ diagonal crossings and $\pi=\Xi(T)$, then $\pi$ has $j$ blocks.
 \end{thm}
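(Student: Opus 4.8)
The plan is to establish that $\Xi$ is a bijection by constructing an explicit inverse $\Psi$ and verifying that $\Psi\circ\Xi$ and $\Xi\circ\Psi$ are the identity by induction on $n$; the assertion on diagonal crossings will then drop out of the same analysis. Note that a count of cardinalities is of no help here, since the number of square symmetric tableaux of size $2n+1$ is precisely the quantity $B(n,n+1)$ we are trying to determine, so a genuine inverse must be produced.

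First I would collect the structural facts underlying the definition of $\Xi$. Let $B=\{i_1<\cdots<i_m\}$ be the positions of the points in the last row of the half tableau $T$. For $j>1$ the point at $i_j$ has a point to its left in that row, so Condition (2) of Definition~\ref{defi:tlt} forbids a point above it; hence the column $i_j$ carries exactly one point. Reflecting through the main diagonal interchanges \emph{``a point above a cell in its column''} and \emph{``a point to the left of a cell in its row''}, so a column with a single lower point corresponds to a row with a single upper point: this is exactly why the row $i_j-1$ is empty. The same reflection principle shows that the leftmost point, at $i_1$, does possess a point above it in its column, which is what attaches it to the rest of the tree. These observations make $\Xi$ well defined.

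Next I would check that the deletion step yields a genuine half square tableau $T'$ of size $2(n-m)+1$. The surviving rows clearly number $n-m$. Writing $M$ for the largest surviving row, the surviving columns are exactly the indices $c\le M$ not among $i_2,\dots,i_m$, and a short count shows that their number also equals $n-m$: the only potential discrepancy would come from an index $i_j=M+1$ with $j>1$, which cannot occur, since the row $i_j-1=M$ would then have been deleted, against the maximality of $M$. After bottom- and left-justification the staircase shape is restored, and Conditions (2),(3) are inherited because we removed only single-point columns and already-empty rows, so no surviving cell is deprived of its attaching point. The inverse $\Psi$ reverses this construction: given $\pi=(B_1,\dots,B_\ell)$ one sets $B:=B_\ell$, builds $T'$ recursively from $(B_1,\dots,B_{\ell-1})$ after relabelling $[n]\setminus B$ down to $[n-m]$, reinserts the columns at $i_2,\dots,i_m$ together with the empty rows at $i_j-1$, appends the last row pointed at $B$, and symmetrizes. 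Checking that $\Psi$ and $\Xi$ are mutually inverse is the main technical point: it amounts to tracking how the relabelling and the reinsertion of columns and empty rows restore the original positions, and I expect this bookkeeping to be the principal obstacle.

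Finally, for the statistic I would argue that the number of blocks of $\Xi(T)$ and the number $\diag(T)$ of diagonal crossings both equal the number of nonempty rows of the half tableau $T$. On one hand, each recursive step of $\Xi$ removes exactly one nonempty row (the last one) and creates exactly one block, while the deleted columns $i_2,\dots,i_m$ meet a point only in that last row and so cannot empty any other row; by induction the number of blocks equals the number of nonempty rows. On the other hand, a non-root diagonal cell $(k,k)$ is empty, and by the reflection principle it has a point above it in its column if and only if it has a point to its left in its row, so $(k,k)$ is a crossing exactly when the part of row $k$ lying strictly below the diagonal---that is, the corresponding row of the half tableau---contains a point; hence $\diag(T)$ also equals the number of nonempty rows. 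Comparing the two counts gives $\diag(T)=$ (number of blocks of $\pi$), completing the proof.
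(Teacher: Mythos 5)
Your proposal is correct and follows essentially the same route as the paper: an inductive construction of the inverse (reinsert the empty rows $i_j-1$ and single-point columns $i_j$ for $j>1$, append the bottom row pointed at $B$, and symmetrize), combined with the observation that diagonal crossings correspond to nonempty rows of the half tableau, which in turn correspond to the blocks created at each recursive step. You supply more detail than the paper on well-definedness (the reflection arguments and the count of surviving rows and columns), but the overall strategy is identical.
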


As a corollary, we obtain the expression~\eqref{eq:E0} since $1/(2-\exp(x))$ is the exponential generating function of ordered partitions. 

An example of the bijection $\Xi$ is shown on Figure~\ref{fig:half_square_bij}. 
\begin{figure}[!ht]
\begin{center}
$$
	\begin{array}{ccccccc}
	\begin{array}{c}
		\includegraphics{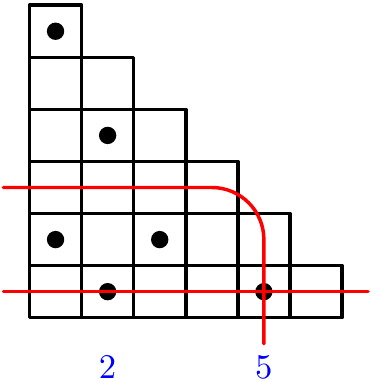}
	\end{array}
	&
	\hspace{-0.26cm}
	\rightarrow
	\hspace{-0.26cm}
	&
	\begin{array}{c}
		\includegraphics{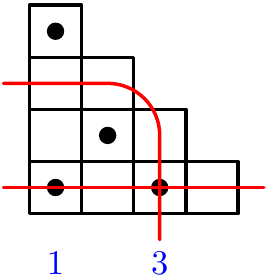}
	\end{array}
	&
	\hspace{-0.26cm}
	\rightarrow
	\hspace{-0.26cm}
	&
	\begin{array}{c}
		\includegraphics{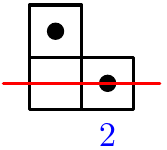}
	\end{array}
	&
	\hspace{-0.26cm}
	\rightarrow
	\hspace{-0.26cm}
	&
	\begin{array}{c}
		\includegraphics{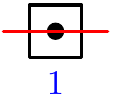}
	\end{array}\\
	(\{3\}, \{6\}, \{1,4\},\textcolor{blue}{ \{2,5\} })
	&
	\hspace{-0.26cm}
	\leftarrow
	\hspace{-0.26cm}
	&
	(\{2\}, \{4\}, \textcolor{blue}{ \{1,3\} })
	&
	\hspace{-0.26cm}
	\leftarrow
	\hspace{-0.26cm}
	&
	(\{1\}, \textcolor{blue}{ \{2\} })
	&
	\hspace{-0.26cm}
	\leftarrow
	\hspace{-0.26cm}
	&
	( \textcolor{blue}{ \{1\} })
	\end{array}
  $$
\caption{The bijection $\Xi$ between half square tableaux and ordered partitions \label{fig:half_square_bij}}
\end{center}
\end{figure}

\begin{proof}
The property about diagonal crossings and blocks is obvious for $\Phi$ by construction, since diagonal crossings correspond to nonempty rows in a half tableau.

We describe the reciprocal construction of $\Xi$, once again inductively. Assume $\Xi^{-1}$ is known for ordered partitions with $\ell$ blocks, an let $\pi$ have size $n$ and $\ell+1$ blocks $B_1,B_2,\ldots,B_{\ell+1}$. Assume $B_{\ell+1}$ has $m$ elements $\{i_1<\ldots<i_m\}$ and consider the partition $\pi'$ with $m$ blocks $B_1',B_2',\ldots,B'_{\ell}$ which are $B_1,B_2,\ldots, B_\ell$ relabeled on $\{1,\ldots,n-m\}$, so we can construct $T=\Xi^{-1}(\pi')$. In $T$, insert rows in positions $i_2-1,\ldots,i_m-1$ from top to bottom, and columns in positions $i_2,\ldots,i_m$ from left to right; to finish, add a bottom row with $n$ cells and points in positions $i_1,i_2,\ldots,i_m$. The result is the wanted (half) tableau $\Xi^{-1}(\pi)$. It is easily verified that this is indeed the inverse of $\Xi$.
\end{proof}

We end this section with two remarks. First notice the case $j=n$: the bijection $\Xi$ then restricts to a bijection between permutations of $\{1,\ldots,n\}$ and inversion arrays. It is tempting to consider half tableaux as generalized inversion arrays for ordered partitions and investigate which properties of permutations can be extended.

To finish, consider the numbers $B(n,n)$ which counts almost square symmetric tableaux, that is non square tableaux whose shape becomes square after adding a symmetric ribbon. The generating function $E_1(x)$ is $(1-x)/(2-\exp(x))$ by~\ref{eq:E_h}; thus $B(n,n)$ counts \emph{threshold graphs on $[n]$ with no isolated vertices} \cite[A053525]{oeis}. This can be also shown bijectively using the construction of  Proposition~\ref{prop:Bnk} in the case $k=n+2$, Theorem~\ref{th:squaresymm} and the formula in~\cite[Exercise 5.4.b.]{StanleyEnum2}

\section{Bijections with permutations}
\label{sec:bijections}

 It is pretty straightforward to derive bijections between permutations and tableaux from Theorem~\ref{thm:insertion}. Here we will single out two such bijections because of their specific properties. The first one sends crossings in a tableau to occurrences of the pattern 2-31 in a permutation, and the second one preserves a tree structure underlying both objects. 

\begin{rem}
Both bijections of this section can be extended with little effort to bijections between symmetric tableaux and {\em signed} permutations.
\end{rem}

\subsection{A first bijection which respects 2-31 patterns.}
\label{sub:bijection1}

As mentioned at the end of Section~\ref{sub:mainresult}, it is immediate to construct bijections from $\TLT_n$ to permutations using $\ip$. We will here define one with the goal of sending crossings of tableaux to occurrences of the pattern 2-31 in a permutation.

First, a tableau $T$ of size $n$ is naturally encoded by a list of integers $a(T)=(a_1(T),\ldots,a_n(T))$ satisfying $0\leq a_i(T)\leq i-1$. This is done as follows: let $T_1,T_2,T_3,\ldots,T_n=T$ be the  insertion history of $T$ . For $i\in\{2,\ldots,n\}$, we define $a_{i}(T)$ as the index $j$ such that $\ip(T_{i-1},e_j)=T_{i}$, using the labeling of boundary edges defined before Proposition~\ref{prop:bouh}.

We now give an algorithmic description of our first bijection $\Phi_1$. For $i=n,n-1,\ldots$ down to $i=1$, apply the following: consider the set $\{1,\ldots,n\}\backslash\{\si(i+1),\ldots,\si(n)\}=:\{x_0<x_1<\ldots<x_{i-1}\}$ arranged in increasing order and simply define $\si(i)=x_{a_i(T)}$; the function $\Phi$ is then defined by setting $\Phi_1(T)=\si$. For instance, if $a(T)=(0,1,0,3,1)$, one finds easily $\si=34152$.

\begin{thm}
\label{th:2bij}
 $\Phi_1$ is a bijection from $\TLT_n$ to permutations of length $n$. If $\si=\Phi_1(T)$, then $cr(T)$ is equal to the number of occurrences of 2-31 in $\si$.
\end{thm}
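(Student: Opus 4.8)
The plan is to prove the two claims of Theorem~\ref{th:2bij} separately. First I would establish that $\Phi_1$ is a bijection, and then that it carries crossings to occurrences of the pattern 2-31.

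\emph{Bijectivity.} The map $T\mapsto a(T)$ sending a tableau to its history-encoding is a bijection from $\TLT_n$ onto the set of integer sequences $(a_1,\ldots,a_n)$ with $0\le a_i\le i-1$: this is an immediate consequence of Theorem~\ref{thm:insertion}, since the insertion history gives a unique factorization of each $T$ through single insertions, and at step $i$ there are exactly $i$ boundary edges to choose from, indexed $0,\ldots,i-1$. So it suffices to check that the algorithmic step $\si(i)=x_{a_i(T)}$ defines a bijection from these sequences to permutations of $\{1,\ldots,n\}$. This is the standard Lehmer-type encoding: running $i$ from $n$ down to $1$, at each stage exactly the values $\si(i+1),\ldots,\si(n)$ have been used, so $\{x_0<\cdots<x_{i-1}\}$ has exactly $i$ elements and $a_i\in\{0,\ldots,i-1\}$ selects a legal one; reading off the $a_i$ from a given $\si$ recovers the sequence uniquely. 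Hence $\Phi_1$ is a bijection.

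\emph{Statistics.} For the crossing statistic, the natural strategy is to track both quantities through a single insertion and show they change in lockstep. By Lemma~\ref{lem:crossings_ribbons} and Proposition~\ref{prop:bouh}, when we pass from $T_{i-1}$ to $T_i=\ip(T_{i-1},e_{a_i})$, the number of crossings increases by $k-a_i$ if the special cell of $T_{i-1}$ is $b_k$ with $k>a_i$, and does not increase otherwise; equivalently the increment is $\max(k-a_i,0)$. The key is to identify, on the permutation side, how inserting the value $\si(i)=x_{a_i}$ into the already-constructed suffix $\si(i+1)\cdots\si(n)$ creates new occurrences of 2-31 whose ``2'' is at position $i$. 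An occurrence $(i,j)$ of 2-31 requires $i<j<n$ and $\si_{j+1}<\si_i<\si_j$, so for fixed $i$ the new occurrences are pairs $(j,j+1)$ to the right of $i$ that form a descent straddling the value $\si_i$; I would argue that the count of such straddling descents equals precisely $\max(k-a_i,0)$, where $k$ records the position of the special cell. Summing over $i$ then gives $cr(T)=$ total number of 2-31 occurrences.

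The main obstacle will be the middle step: establishing the precise dictionary between the special-cell position $k$ at stage $i$ and the combinatorics of descents in the suffix that straddle the inserted value. I would approach this by reading off, from the insertion history, what the label of the boundary edge $e_{a_i}$ means geometrically (it counts boundary cells strictly to its Southwest) and matching this against the count of already-placed larger-then-smaller consecutive pairs spanning $x_{a_i}$; the Southwest-to-Northeast ordering of boundary edges should correspond, under $\Phi_1$, exactly to the order statistics of the unused values, so that the $k-a_i$ crossings added are in bijection with the descents $(\si_j>\si_{j+1})$ satisfying $\si_{j+1}<\si_i<\si_j$. Verifying this correspondence rigorously — that the special cell tracks the relevant descent structure step by step — is where the real content lies; once it is in place, the summation over $i$ is routine.
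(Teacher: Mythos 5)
Your bijectivity argument is fine and matches the paper (which simply asserts this step). The problem is in the statistics part, and it is not just an unverified detail: the term-by-term correspondence you propose is wrong. You want to match the crossings created at the tableau step $T_{i-1}\to T_i$, which by Proposition~\ref{prop:bouh} number $\max(a_{i-1}(T)-a_i(T),0)$ (since the special cell of $T_{i-1}$ sits at position $a_{i-1}(T)$), with the 2-31 occurrences whose ``2'' is at position $i$, i.e.\ with $d_i:=\#\{j: i<j<n,\ \si_{j+1}<\si_i<\si_j\}$. These two sequences of increments are not equal. Take the paper's own example $a(T)=(0,1,0,3,1)$, $\si=34152$: one has $d_1=2$, $d_2=1$, $d_3=d_4=d_5=0$, whereas the crossing increments are $0,1,0,2$ for $i=2,3,4,5$. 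The totals agree (both are $3$), but $d_2=1\neq\max(a_1-a_2,0)=0$ and $d_5=0\neq\max(a_4-a_5,0)=2$. The underlying reason is that the two constructions consume the code in opposite directions: the tableau is built by inserting for $i=2,\ldots,n$, while $\si$ is built by choosing values for $i=n,\ldots,1$, so there is no single ``insertion step'' at which both quantities change in lockstep the way you describe.

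The correct dictionary groups the 2-31 occurrences by the position of the descent (the ``31'' part), not of the ``2''. From the definition of $\Phi_1$ one gets $a_i(T)=\#\{k<i:\si_k<\si_i\}$, whence the number of occurrences $(k,i)$ with the ``31'' at positions $(i,i+1)$ equals $\max(a_i(T)-a_{i+1}(T),0)$; summing over $i$ gives that the total number of 2-31 occurrences is $\sum_{i=1}^{n-1}\max(a_i(T)-a_{i+1}(T),0)$. Proposition~\ref{prop:bouh} identifies exactly the same sum with $cr(T)$ (the term for $i$ being the ribbon cells added at step $i+1$ of the insertion history). This is the paper's argument: the equality of the two statistics is a global identity between two sums with the same terms, not a step-by-step invariant of a single induction. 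To repair your proof, replace the ``2-at-position-$i$'' bookkeeping by this descent-based grouping; the rest of your outline then goes through.
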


\begin{proof}

 First, it is clear that the construction is bijective. 
 
 Now  $i$ is a descent in $\si$ if and only if $a_{i}(T)\geq a_{i+1}(T)$. Moreover, in this case, this descent $i$ will give rise to exactly $a_{i}(T)-a_{i+1}(T)$ occurrences of the pattern 2-31 of the form $(\si(k),\si(i),\si(i+1))$ an occurrence of 2-31. 
 So we showed that the number of occurrences of 2-31 in $\si$ is given by $\sum_{i=1}^{n-1}max\left(a_{i}(T)-a_{i+1}(T),0\right)$.

But it is an easy reformulation of Proposition~\ref{prop:bouh} that this quantity is precisely $cr(T)$, which completes the proof.
 
\end{proof}

This bijection is much simpler than bijection II from~\cite{CorNad}, which was designed specifically to preserve the equivalent pattern 31-2.\medskip

\subsection{A second bijection which ``respects trees''.}
\label{sub:bijection2} 

Here we show that the tree structure of our tableaux can be naturally sent to the tree structure on permutations underlying their  representations as increasing trees.

\noindent{\bf From permutations to binary trees:} We define an \emph{increasing tree of size $n$} to be a binary tree of size $n$  where the $n$ nodes are labeled by all integers in $\{1,\ldots,n\}$ in such a way that the labels increase along the path from the root to any node. There is a well-known bijection with permutations: given an increasing tree, traverse its vertices in \emph{inorder}, which means recursively traverse the left subtree, then visit the root, then traverse the right subtree. By recording node labels in the order in which they are visited, one obtains the wanted permutation: see Figure~\ref{fig:treesfrompermtab} (left). If $\si$ is a permutation with associated increasing tree $inctree(\si)$, then we define $tree(\si)$ as the binary tree obtained by forgetting the labels in $inctree(\si)$.

\noindent{\bf From tree-like tableaux to binary trees:} We described this in Remark \ref{rem:tree}. It can also be obtained graphically by drawing two lines from every point of $T$, one down and one to the right, and stopping them at the boundary. We let $tree(T)$ be the binary tree thus constructed, see Figure~\ref{fig:treesfrompermtab} (right). Note that there is a natural identification between boundary edges of $T$ and leaves of $tree(T)$. 

\begin{figure}[!ht]
\begin{center} 
 \includegraphics[width=0.7\textwidth]{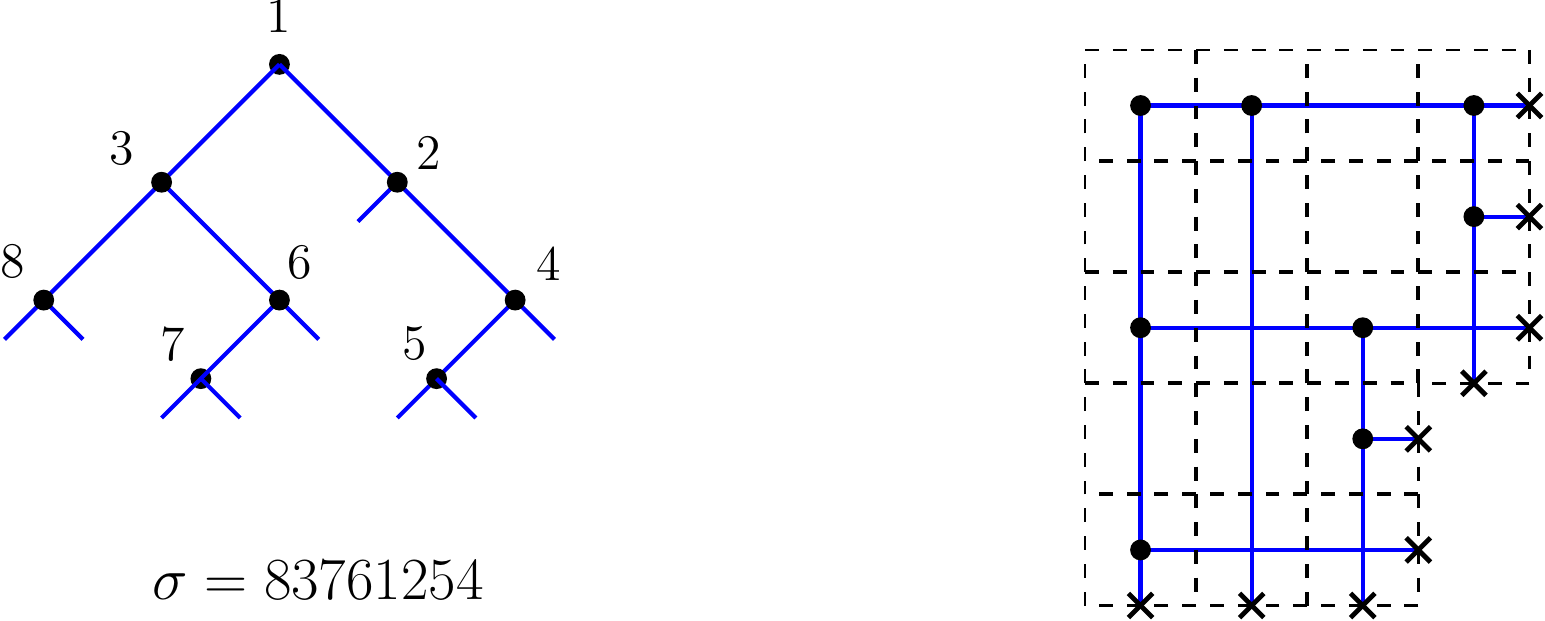}
\caption{The same binary tree arising from a permutation (left) and a tableau (right). \label{fig:treesfrompermtab}}
\end{center}
\end{figure}

Using $\ip$, we now define a bijection $\Phi_2$ between permutations and tree-like tableaux which preserves the binary trees attached to the objects. For this we proceed by induction on $n$.

 Let $\si$ be a permutation of size $n+1$, and $\tau$ be the permutation of size $n$ obtained by deleting $n+1$ in $\si$. By induction hypothesis, the tableau $T:=\Phi_2(\tau)$ is well defined and satisfies $tree(T)=tree(\tau)$. Define $L$ to be the leaf of $inctree(\tau)$ appearing in the inorder traversal at the position occupied by $n+1$ in $\si$: then $inctree(\si)$ is obtained by replacing $L$ by a node labeled $n+1$ with two leaves. Now $L$ corresponds naturally to a boundary edge $e_L$ in $\Phi_2(\tau)$, and we define $\Phi_2(\si)=\ip(\Phi_2(\tau),e_L$).

\begin{thm}
\label{th:1bij}
 Given $n\geq 1$, the function $\Phi_2$ is a bijection between permutations of length $n$ and tree-like tableaux of size $n$, satisfying $tree(\si)=tree(\Phi_2(\si))$.
\end{thm}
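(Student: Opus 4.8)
The plan is to prove the statement by induction on $n$, which is natural since the definition of $\Phi_2$ is itself inductive. The base case $n=1$ is immediate: there is a unique tree-like tableau of size $1$ and a unique permutation of length $1$, and both yield the tree consisting of a single node (or the trivial tree), so $\Phi_2$ is trivially a well-defined tree-preserving bijection. For the inductive step, I would assume that $\Phi_2$ restricted to size $n$ is a bijection onto $\TLT_n$ satisfying $tree(\tau)=tree(\Phi_2(\tau))$ for all permutations $\tau$ of length $n$.

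First I would verify that $\Phi_2$ is well-defined and bijective at size $n+1$. Given a permutation $\si$ of length $n+1$, deleting the value $n+1$ yields a permutation $\tau$ of length $n$, and conversely $\si$ is recovered from $\tau$ together with the \emph{position} at which $n+1$ is inserted; these positions are in bijection with the leaves of $inctree(\tau)$ via the inorder traversal. By the induction hypothesis and Theorem~\ref{thm:insertion}, the map $\Phi_2$ sends the pair $(\tau, L)$ to $\ip(\Phi_2(\tau), e_L)$, and since $L \mapsto e_L$ is the natural identification between leaves of $tree(\Phi_2(\tau))=tree(\tau)$ and boundary edges of $\Phi_2(\tau)$, this is exactly the $1$-to-$(n+1)$ correspondence of Theorem~\ref{thm:insertion} repackaged through the induction hypothesis. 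Thus bijectivity at size $n+1$ follows from bijectivity at size $n$ combined with the bijectivity of $\ip$.

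The heart of the proof, and the step I expect to be the main obstacle, is the tree-preservation property $tree(\si)=tree(\Phi_2(\si))$. On the permutation side, $inctree(\si)$ is obtained from $inctree(\tau)$ by replacing the leaf $L$ with a new node carrying two fresh leaves; forgetting labels, $tree(\si)$ is obtained from $tree(\tau)$ by expanding the leaf $L$ into an internal node with two children. On the tableau side, $\Phi_2(\si)=\ip(\Phi_2(\tau),e_L)$, so I must show that applying $\ip$ at the boundary edge $e_L$ has precisely the effect of expanding the corresponding leaf of $tree(\Phi_2(\tau))$ into a node with two leaves. The key claim to establish is therefore the following local statement about $\ip$ and the tree construction of Remark~\ref{rem:tree}: \emph{for any tableau $T'$ and boundary edge $e$, the tree $tree(\ip(T',e))$ is obtained from $tree(T')$ by replacing the leaf corresponding to $e$ with an internal node bearing two new leaves, while leaving the rest of the tree unchanged.}

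To prove this local claim I would argue directly from the definition of $\ip$ and the graphical description of $tree$ (drawing a horizontal and vertical segment from each point to the boundary). Inserting a row or column at $e$ and pointing its extremal cell creates exactly one new point, which becomes a node whose two outgoing segments hit two boundary edges — these are the two new leaves replacing the leaf of $e$; all previously existing points keep their segments hitting the same edges as before, so the rest of the tree is untouched. The delicate part is case~(2), where a ribbon is added: here I must check that the ribbon cells, being empty, create no new points and hence no new nodes, and that the added ribbon does not reroute any existing segment (it only lengthens the boundary path between the new point and the old special point without placing points in between, as guaranteed by Lemma~\ref{lemma:fundamental}). Once this local claim is established, the tree-preservation property follows by combining it with the induction hypothesis $tree(\tau)=tree(\Phi_2(\tau))$ and the matching of leaves with boundary edges, completing the induction.
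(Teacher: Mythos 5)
Your proposal is correct and follows exactly the route the paper intends: the paper merely asserts that the theorem ``is a simple consequence of the properties of $\ip$,'' and your induction, with the key local claim that $\ip(T',e)$ expands the leaf of $e$ in $tree(T')$ into a node with two new leaves (checked carefully in the ribbon case, where the empty ribbon cells only lengthen the rays without creating nodes or rerouting the tree), is precisely the argument being left to the reader. Your treatment of bijectivity via the correspondence $\si\leftrightarrow(\tau,L)$ and Theorem~\ref{thm:insertion} is likewise the intended one, so your write-up is simply a more detailed version of the paper's proof.
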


This is a simple consequence of the properties of $\ip$. The permutation $\si$ and the tree-like tableau $T$ from Figure~\ref{fig:treesfrompermtab} satisfy $\Phi_2(\si)=T$.
\medskip

\begin{rem}
The tree structure attached to tableaux is not new: first Burstein~\cite{Burstein} defined it on so-called \emph{bare tableaux}, which are essentially our tree-like tableaux minus a column. Then this tree structure was also studied by the third author in some detail~\cite[Section 4]{Nad}. The main difference is that, although the (unlabeled) tree structure is essentially the same, the labeling is quite different: here we have a quite simple bijection with increasing trees, while the labelings from the two aforementioned references involve some complicated increasing/decreasing conditions. The root of such complication can be traced to the fact that the boundary edges in~\cite{Burstein,Nad} were labeled independently of the structure of the tree, while here we use the tree to determine the labeling.
\end{rem}

\section{Further results and questions}
\label{sec:conclusion}

In this work we described a very simple insertion procedure $\ip$ which can be seen as a $1$-to-$(n+1)$ correspondence between the sets $\TLT_n$ and $\TLT_{n+1}$. We proved that from this simple seed one could produce automatically most of the enumerative results known on tableaux, as well as design bijections to permutations with various properties. Other enumeration results can also be proved with the same techniques: enumeration of tableaux according to the number of rows (this gives Eulerian numbers~\cite{Williams_Grassmann}), or the total number of cells. 

A further question would be to revisit the work of Corteel and Williams on the PASEP model from statistical mechanics (see~\cite{CorWil_Staircase, CorWil_Tableaux, CorWil_Markov}), which involves objects related to alternative tableaux. In particular, do their (weighted) staircase tableaux have recursive decompositions similar to those given here for tree-like tableaux ? An answer is given in \cite{staircase-ABD}.


As mentioned in the introduction, this work founds its origin in problems about trees, and not tableaux; we will here briefly describe such a problem. Suppose we draw the nodes of a plane binary tree as points in the center of unit cells of $\mathbb{Z}^2$, where the children of a node are drawn below and to the right of this node (as in the trees $tree(T)$ attached to a tree-like tableau $T$); we allow edges to cross outside of nodes. Let us call the drawing unambiguous if, when one deletes the edges of the tree, it is then possible to reconstruct them uniquely: one sees that this comes down essentially to condition (2) in Definition~\ref{defi:tlt}. 
We are led to the following definition: a \emph{ambiguous tree} is a tree-like tableau $T$ with rectangular shape. 
Such objects are investigated in \cite{UAT-ABBS} where several combinatorial results are obtained.

\subsubsection*{Acknowledgements}

The authors thank Valentin F\'eray for insightful comments. The second author is grateful to G\'erard H. E. Duchamp and \\Christophe Tollu for useful discussions during the CIP seminar at Paris 13 university, discussions which were the starting point of this work.

This research was driven by computer exploration using the open-source mathematical software \texttt{Sage} \cite{sage} and its algebraic combinatorics features developed by the \texttt{Sage-Combinat} community~\cite{Sage-Combinat}.


\bibliographystyle{alpha}
\bibliography{BiblioTreelikeTableaux}

\end{document}